%% file: main.tex
\documentclass[11pt]{amsart}

\input{preamble}

\title{Refined invariants for Abelian surfaces: between polynomiality and modularity}
\author{Thomas Blomme}
\address{Institut Mathématique de Toulouse, 118 route de Narbonne. F-31062 Toulouse Cedex 9}
\email{thomas.blomme@cnrs.fr}
\author{Gurvan Mével}
\address{Institut de Mathématiques de Jussieu - Paris Rive Gauche, Sorbonne Université, 4 place Jussieu, 75252 Paris Cédex 5, France}
\email{gurvan.mevel@imj-prg.fr}

\begin{document}

\begin{abstract}
    Tropical refined invariants for toric surfaces, introduced Block and Göttsche, are obtained couting tropical curves with a Laurent polynomial multiplicity. Brugallé and Jaramillo-Puentes then exhibited a polynomial behavior of the coefficients of this Laurent polynomial, seen as function on the curve degree. The authors provided explicit formula for small genus, involving quasi-modular forms.

    Inspired by the toric setting, the first-named author defined refined invariants for abelian surfaces and extended the polynomiality result. In this paper, we further study this regularity for abelian surfaces, providing explicit formulas involving quasi-modular forms. This resonates with the small genus cases of the toric setting.
\end{abstract}

\maketitle

\tableofcontents

\section{Introduction}

\subsection{Setting}

\subsubsection{Toric case}

Since Mikhalkin's correspondence theorem \cite{mikhalkin2005enumerative}, counts of tropical curves have been an efficient tool in enumerative geometry. One can endow tropical curves with integers multiplicities that enable the computation of Gromov-Witten (resp. Welschinger) invariants, which are complex (resp. real) invariants obtained counting complex (resp. real) curves of given genus and degree passing through a given configuration of points. Introduced by Block and Göttsche \cite{block2016fock,block2016refined}, tropical refined invariants are Laurent polynomials (in the variable $q$) obtained counting instead tropical curves with a Laurent polynomial multiplicity. These invariants interpolate between complex and real enumerations of curves on toric surfaces as their evaluation at $q=1$ recovers Gromov-Witten invariants (or Severi degrees), while plugging $q=-1$ gives (tropical) Welschinger invariants.
The invariance of the tropical enumeration with the Block-Göttsche multiplicities was established by Itenberg and Mikhalkin \cite{itenberg2013block}.

\medskip

Given a fixed toric surface $X$ and a non-negative integer $\delta$, it was conjectured by Di Francesco-Itzykson in case $X=\PP^2$ \cite{difrancesco-1995-quantum} and more generally by Göttsche \cite{gottsche1998conjectural}, that the number of curves in $X$ with $\delta$ nodes and passing through the appropriate number of points (i.e. the Severi degree) behaves polynomially when the linear system (the ``degree'' of the curves) varies. This was first proved by Tzeng \cite{tzeng2010proof}.

\medskip

By the adjunction formula, the genus and the number of nodes play a dual role. However, if ones fixes the genus instead, polynomiality is not preserved. With G\"ottsche's conjecture in mind, Brugallé and Jaramillo-Puentes studied the coefficient of fixed \textit{codegree} of the refined invariants, fixing the genus and varying the linear system. Surprisingly, they recovered a polynomial behavior \cite{brugalle-2022-polynomiality}. The second-named author studied in more details the genus~$0$ case and showed some universal formulas for the refined invariant \cite{mevel-2026-universal}: polynomials are constant given by a power of the function of partition numbers. The authors then generalized this result to the genus $1$ case \cite{blommemevel2025asymptotic}, where the coefficients are not constant anymore, witnessing the appearance of the first Eisenstein series, suggesting an interaction of the coefficients with quasi-modular forms.

The authors conjectured that for any genus, the tropical refined invariants have bounded degree, and the coefficients of these polynomials giving the coefficients are given by quasi-modular forms \cite[Conjecture 1.1]{blommemevel2025asymptotic}. The polynomiality results for coefficients of fixed codegree were also reformulated as asymptotic statement for a corresponding generating series called \emph{asymptotic refined invariant}, see Section \ref{subsec-laurenttopoly}. These three papers \cite{brugalle-2022-polynomiality, mevel-2026-universal, blommemevel2025asymptotic} all use a floor diagrams algorithm, introduced in \cite{brugalle2007enumeration, brugalle2008floor}, or a derived combinatorial method.


\subsubsection{Abelian case}
    
Though developed in the case of toric surfaces, the use of tropical curves has since been extended to work in other cases as well, especially the case of abelian surfaces with a correspondence theorem by Nishinou \cite{nishinou2020realization}. The first-named author recently introduced tropical refined invariants in this setting \cite{blomme2022abelian1,blomme2022abelian2}. He developed a pearl diagram algorithm, a combinatorial method similar to the floor diagram one, to handle the calculations in Abelian surfaces, and established some polynomiality results \cite{blomme2022abelian3}, see Theorem \ref{theo:polynomiality} below. He also gave explicit formulas for \emph{primitive classes}. We refer to Section \ref{sec-refinedinv} for more details on the history of refined invariants for Abelian surfaces.
    
The goal of this paper is to further study the regularity of the formulas obtained in \cite{blomme2022abelian3}, and to highlight the appearance of quasi-modular forms. We hope our results support \cite[Conjecture 1.1]{blommemevel2025asymptotic}.

\subsection{Results and organisation of the paper}

In Section \ref{sec-refinedinv} we review previous works on refined invariants for Abelian surfaces. In Section \ref{sec-polyandasympt} we recall the polynomiality results of \cite{blomme2022abelian3} and introduce the \emph{asymptotic refined invariant} $AR_g^\star(n,x)$, where $g$ is the genus, $n$ the self-intersection of the curves class we look at and $x$ a formal variable. This \emph{true} polynomial is related to the tropical refined invariant (a \emph{Laurent} polynomial) by a shift of the coefficients, see sections \ref{subsec-laurenttopoly} and \ref{subsec-asymptotics} for more precise explanations. We then state our main result which is as follows. 

\begin{named}{Theorem \ref{theorem-abelian-codeg-series}}
    For fixed genus $g$, there exists quasi-modular forms $f_1,\dots,f_{g-3}$ vanishing at $0$ such that
    $$ AR_g^\star(n,x)= \bino{n}{g-1}+\sum_{k=1}^{g-3} f_k(x)n^k \in \QQ[\![x]\!][n].$$
\end{named}

The theorem is to be understood as follows. The self-intersection of the curve class $\beta$ is $2n$, and a polynomial dependence in $n$ is a particular case of polynomial dependence in $\beta$. The codegree $i$ coefficient of the refined invariant is given by a polynomial $Q_{g,i}(n)$ for $n$ big enough. The polynomial $Q_{g,i}(n)$ giving the coefficient is the $x^i$-coefficient of $AR_g^\star(n,x)$. Furthermore, the $n^l$ coefficient of $Q_{g,i}(n)$, seen as a function of $i$, are coefficients of a quasi-modular form.

\medskip

Section \ref{sec:proofs} is devoted to the proof of the theorem. The proof being quite technical, we first proceed for small values of the genus $g$. Both in particular examples and in the general case, the proof consists in a careful combinatorial analysis of the formulas of \cite{blomme2022abelian3}. Quasi-modular forms appear in Lemma \ref{lemma-Gm-quasi-modular}.

Last, in Section \ref{sec-fixedcodeg} we compute the first coefficients of the generating series with the genus as the parameter. Although it is not obvious to us that a pattern appears, we show these calculations for someone to build upon.



\tocless{\subsection*{Acknowledgments}}
This work was initiated during a visit of GM in Neuchâtel. We thank University of Neuchâtel for excellent working environment.



\section{A summary on refined invariants for abelian surfaces} \label{sec-refinedinv}

There are two ways to apprehend refined invariants of Abelian surfaces. One through the tropical picture \cite{blomme2022abelian1,blomme2022abelian2,blomme2022abelian3}, and one through (reduced) Gromov-Witten theory \cite{bryan2018curve}. We shortly review both here, and provide explicit formulas used to prove the main result of the paper.

    \subsection{Tropical refined invariants for Abelian surfaces}

    \subsubsection{Tropical tori and tropical curves}
    
    We refer to \cite{blomme2022abelian1} or to \cite{blomme2025short} for a broader introduction to tropical curves in tropical abelian varieties.

    \begin{defi}
        A tropical torus $\TT A$ is a quotient $\RR^2/\Lambda$ where $\Lambda\simeq\ZZ^2$ is a rank $2$ lattice. We denote the inclusion by $S\colon\Lambda\hookrightarrow\RR^2$. The quotient possesses a natural integral structure given by $\ZZ^2\subset\RR^2$.
    \end{defi}
    
    \begin{defi}
    A parametrized tropical curve in $\TT A$ is a map $h\colon\Gamma\to\TT A$ where
    \begin{enumerate}
        \item $\Gamma$ is a metric graph;
        \item $h$ is affine with integral slope on the edges of $\Gamma$;
        \item $h$ satisfies the balancing condition: the sum of outgoing slope at each vertex is $0$.
    \end{enumerate}
    \end{defi}

    The \textit{genus} $g$ of a parametrized tropical curve is the first Betti number of the underlying graph. Its \textit{gcd} $\delta$ is the g.c.d. of integral length of slopes of edges.
    
    Let $h\colon\Gamma\to\TT A$ be a parametrized tropical curve. If $e$ is an edge of $\Gamma$ with a chosen orientation and $u_e$ its slope following the associate orientation, so that $u_ee$ does not depend on the chosen orientation, we can consider the following $1$-chain in $\TT A$, with $\ZZ^2$-coefficients:
    $$[\Gamma]=\sum_e u_e e \in C_1(\TT A,\ZZ^2).$$
    The balancing condition implies that its boundary is $0$ and thus this $1$-chain is actually a $1$-cycle. The homology class realized by $[\Gamma]$ is called the \textit{degree} of the tropical curve. It is denoted by $B$ and belongs to $H_1(\TT A,\ZZ^2)\simeq \Lambda\otimes\ZZ^2$. In particular, $B\in \Lambda\otimes\ZZ^2$ can be also be seen as a linear map
    $$B\colon \Lambda^*\to \ZZ^2.$$

    For a generic choice of $S\colon\Lambda\hookrightarrow\RR^2$ (i.e. a generic choice of real matrix), there is usually no tropical curve in $\TT A$. When such a choice exists and we denote by $B$ its degree, assuming $\det(B)\neq 0$, the tropical torus is called a \textit{tropical abelian variety}, and $B$ is a \textit{polarization}. According to \cite[Lemma 2.9]{blomme2025short}, we have the following.

    \begin{prop}
        Let $\TT A$ be a tropical torus given by $S\colon\Lambda\hookrightarrow\RR^2$. Then $B\colon\Lambda^*\to\ZZ^2$ is a polarization if and only if $BS^\intercal\colon(\RR^2)^*\to\RR^2$ induces a symmetric and positive definite bilinear form.
    \end{prop}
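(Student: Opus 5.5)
The plan is to prove both implications by reading the existence of a tropical curve of degree $B$ as a single linear-algebra condition on $S$ and $B$, namely that $BS^\intercal$ is symmetric, and then to get positivity from the same identity once $\det B\neq 0$. Fix bases: $\Lambda\simeq\ZZ^2$ with basis $\lambda_1,\lambda_2$, so that $S$ is a real $2\times 2$ matrix whose columns are $S(\lambda_1),S(\lambda_2)$. The degree $B\in\Lambda\otimes\ZZ^2$ is an integer $2\times 2$ matrix, its $j$-th column being the $\ZZ^2$-coefficient paired with the cycle $\lambda_j$. Then $BS^\intercal=\sum_j B_j\otimes S(\lambda_j)$ is naturally a tensor in $\RR^2\otimes\RR^2$, i.e. a bilinear form on $(\RR^2)^*$.

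\textbf{Necessity.} Let $h\colon\Gamma\to\TT A$ be a tropical curve of degree $B$. I will use the standard flux/area argument. Take the $\RR^2$-valued $1$-form $dx=(dx_1,dx_2)$ on $\TT A$ and the constant-coefficient $2$-form $dx_1\wedge dx_2$. The key identity is the antisymmetric part of $\sum_e u_e\otimes(\text{vector of }h(e))$ computed in two ways.
\begin{itemize}
\item Via the cycle $[\Gamma]$: its homology class is $\sum_j B_j\otimes\lambda_j$, so integrating $dx$ along it gives the tensor $\sum_j B_j\otimes S(\lambda_j)=BS^\intercal$.
\item Locally: each edge $e$ has displacement $\ell_e u_e$, so it contributes $\ell_e\, u_e\otimes u_e$, which is symmetric.
\end{itemize}
The subtlety is that integrating a non-closed tensor along a cycle in a torus is not purely homological. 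To handle this, lift $\Gamma$ to its universal abelian cover in $\RR^2$ and use the balancing condition to write $[\Gamma]$ as a sum of closed lifted loops. The antisymmetric part of $\sum_e u_e\otimes\int_e dx$ then becomes $\sum u_e\wedge(\text{endpoint positions})$. Balancing makes this telescope to the contribution of the lattice translations only, which gives exactly $BS^\intercal-(BS^\intercal)^\intercal$. Since the local computation is symmetric, this antisymmetric part vanishes, so $BS^\intercal$ is symmetric.

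For positivity, compute the quadratic form: $\xi^\intercal BS^\intercal\xi=\sum_e\ell_e\langle\xi,u_e\rangle^2\geq 0$. It vanishes only if every edge slope is orthogonal to $\xi$, which would force $\operatorname{rank}B\le 1$. This contradicts $\det B\neq0$, so the form is positive definite.

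\textbf{Sufficiency.} Assume $BS^\intercal$ is symmetric positive definite. Construct an explicit curve, for instance a genus-$2$ tropical curve or a union of translated closed geodesics/honeycomb. Concretely, diagonalize over $\ZZ$ using Smith normal form, so that $B=P\,\mathrm{diag}(d_1,d_2)\,Q$ after changing bases of $\Lambda$ and $\ZZ^2$, and reduce to building a curve in the standard form. Then glue edges with slopes the columns of $B$ along the lattice vectors. Symmetry of $BS^\intercal$ is exactly the closing condition that makes the balancing consistent. Alternatively, one may cite \cite{blomme2025short}.

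\textbf{Main obstacle.} The hardest step is making the telescoping flux computation in the necessity direction rigorous: showing that the antisymmetric part depends only on the homology class and equals $BS^\intercal-SB^\intercal$. Sufficiency I expect to be a routine construction.
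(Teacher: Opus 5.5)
The paper gives no argument of its own for this statement; it simply quotes \cite[Lemma 2.9]{blomme2025short}. Your fallback of citing that reference is therefore exactly the paper's route. Read as a self-contained proof, your necessity direction is essentially correct, but the sufficiency direction has a genuine gap.

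\textbf{Necessity.} The ``subtlety'' you flag does not exist. The $\RR^2$-valued form $dx$ is translation invariant, hence closed on $\TT A$, and $[\Gamma]$ is a cycle by balancing, so $\int_{[\Gamma]}dx$ depends only on the homology class. Your telescoping argument is just a proof of this: in the cover write $\int_e dx=\tilde h(t(e))-\tilde h(s(e))+S(\lambda_e)$, where $s(e),t(e)$ are the endpoints and $\lambda_e\in\Lambda$ is the closing translation, and note that the vertex terms cancel by balancing. It yields the \emph{full} identity $\sum_e \ell_e\, u_e\otimes u_e = BS^\intercal$ (up to your transpose convention), not merely the equality of antisymmetric parts. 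You need the full identity anyway: your positivity step $\xi^\intercal BS^\intercal\xi=\sum_e\ell_e\langle\xi,u_e\rangle^2$ uses the symmetric part, which your ``antisymmetric part only'' framing leaves unjustified. With the full identity, symmetry, semi-definiteness and definiteness (via $\det B\neq 0$) all follow at once.

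\textbf{Sufficiency.} The construction is where the content lies, and your sketch does not produce a curve.
\begin{itemize}
\item A union of translated closed geodesics fails for generic $S$, since a geodesic of slope $u$ closes only if $\RR u\cap S(\Lambda)\neq 0$.
\item Edges with slopes only along the two columns of $B$ balance separately in each direction, so they again assemble into such geodesics and give no genuine trivalent vertex.
\item Positive definiteness never enters your sketch, yet it is exactly what makes edge lengths positive. Symmetry is not ``exactly the closing condition''.
\end{itemize}
To see the last point, take the genus $2$ curve with two vertices joined by edges of slopes $e_1,e_2,-(e_1+e_2)$ and lengths $\ell_1,\ell_2,\ell_3$. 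Its loops map to $(\ell_1+\ell_3,\ell_3)$ and $(\ell_3,\ell_2+\ell_3)$, so it has degree $I$ in $\RR^2/S'(\Lambda)$ precisely when $S'=\left(\begin{smallmatrix}\ell_1+\ell_3&\ell_3\\ \ell_3&\ell_2+\ell_3\end{smallmatrix}\right)$. Symmetry lets you solve for the $\ell_j$, but they are non-negative only if $0\leqslant S'_{12}\leqslant\min(S'_{11},S'_{22})$. This fails, for instance, for the positive definite $\left(\begin{smallmatrix}1&2\\ 2&10\end{smallmatrix}\right)$, where $\ell_1=-1$.

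The missing steps are as follows.
\begin{enumerate}
\item After Smith normal form, $B=D=\mathrm{diag}(d_1,d_2)$. Put $S'=D^{-1}S$; it is symmetric positive definite because $DS^\intercal=DS'^\intercal D$.
\item Apply Lagrange--Gauss reduction $S'\mapsto PS'P^\intercal$ with $P\in GL_2(\ZZ)$. This is a simultaneous base change of $\Lambda$ and $\ZZ^2$, so it keeps the degree equal to $I$. It reaches $0\leqslant 2S'_{12}\leqslant S'_{11}\leqslant S'_{22}$, so all $\ell_j\geqslant 0$, with $\ell_3=0$ giving a $4$-valent vertex.
\item Build the curve in the reduced basis and transport it back.
\item Push it forward by the integral linear map $D\colon\RR^2/S'(\Lambda)\to\RR^2/S(\Lambda)$. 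This multiplies slopes by $D$ and turns degree $I$ into degree $D=B$.
\end{enumerate}
Some device like the detour through $S'$ is needed: once $d_1\neq d_2$, the base changes that keep $B$ diagonal are too few to reduce the quadratic form directly.
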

    
     The condition from the previous proposition can be seen as a constraint on the choice of $S$, which amounts to choose a real matrix, with now a symmetry condition. For instance, if $B$ is the identity matrix, $S$ shall be chosen symmetric and positive definite. We denote by $\BBB$ the set of possible polarizations, i.e. the set of integer matrices with positive determinant.


    \subsubsection{Enumerative problem and invariants}
    
    We now define the refined invariants by counting tropical curves solution to a suitable enumerative problem. Let $g\geqslant 2$ and $B\in\BBB$ be fixed, and $\TT A$ a generic tropical abelian surface with polarization $B$. It is proven in \cite{blomme2022abelian1} that given a generic set $\P$ of $g$ points inside $\TT A$, there is a finite number of tropical curves passing through $\P$, which are trivalent. 
    
    \begin{defi}[\cite{blomme2022abelian3}]
    Let $h\colon\Gamma\to\TT A$ be a trivalent tropical curve with gcd $\delta$. We define its refined multiplicity by
    $$M_\Gamma = \sum_{k|\delta}\varphi(k)k^{2g-2}\prod_V (q^{m_V/2k}-q^{-m_V/2k}),$$
    where $\varphi(n)=n\prod_{p|n}\left(1-\frac{1}{p}\right)$ is the Euler function, the product is over vertices of $\Gamma$ and $m_V$ denotes the Mikhalkin's multiplicity of the vertex (the absolute value of the determinant of two out of the three outing slopes). The multiplicity is a (symmetric) Laurent polynomial in the variable $q$.
    \end{defi}
    
    We now count solutions of the enumerative problem with their refined multiplicity, setting
    $$BG_{g,B}(\TT A,\P) = \sum_{\substack{h\colon\Gamma\to\TT A \\ h(\Gamma)\supset\P}} M_\Gamma.$$
    By \cite[Theorem 4.12,4.14]{blomme2022abelian1}, $BG_{g,B}(\TT A,\P)$ does not depend on the choice of $\P$ provided it is generic, nor on the choice of $\TT A$ provided it is also generic among tropical tori with polarization given by $B$. We get the Block-G\"ottsche refined invariant $BG_{g,B}(q)$.
    
    By deformation invariance, $BG_{g,B}(q)$ only depends on the equivalence class of $B$, meaning that we can multiply $B$ by integral invertible matrices on both sides. In other words, $BG_{g,B}(q)$ only depends on $B$ through: its \textit{divisibility} (the g.c.d. of its coefficients) and its \textit{self-intersection} $2\det(B)$.


    \subsubsection{Explicit computation}
    
    The computation is enabled by the following two results:
    \begin{itemize}
        \item the multiple cover formula \cite[Theorem 4.10(iv)]{blomme2022abelian3} which reduces the computation to primitive classes (classes with g.c.d. $1$):
        $$BG_{g,B}(q) = \sum_{k|B}k^{2g-1}BG_{g,\widetilde{B/k}}(q^k),$$
        where $\widetilde{B/k}$ is a primitive polarization such that $\det(\widetilde{B/k})=\det(B/k)$;
        \item an explicit computation for primitive classes using the polarization $B=\left(\begin{smallmatrix}
            1 & 0 \\ 0 & n
        \end{smallmatrix}\right)$, yielding
        $$BG_{g,n}(q) = g\sum_{a_1+\cdots+a_{g-1}=n}P_{a_1}\cdots P_{a_{g-1}}, \quad \text{ where }P_a(q)=\sum_{k|a}\frac{a}{k}(q^k-2+q^{-k})\in\ZZ[q^{\pm 1}].$$
    \end{itemize}

In particular, the degree of $BG_{g,n}(q)$ is $n$, and the degree of $BG_{g,B}(q)$ is $\det(B)$.


    \subsubsection{Curves in a fixed linear system}
    
    It is also possible to count genus $g$ tropical curves belonging to a fixed linear system passing a configuration of $g-2$ points, which is the purpose of \cite{blomme2022abelian2}. We obtain other refined invariants denoted by $BG_{g,B}^\star(q)$. They are however related to previous $BG_{g,B}(q)$ by the short formula
    $$BG_{g,B}^\star(q) = \frac{\det(B)}{g(g-1)} \cdot BG_{g,B}(q),$$
    and we therefore do not expand more on their precise definition, though we prefer to provide an asymptotic development for the latter. We refer to \cite{blomme2022abelian2} and \cite{blomme2022abelian3} for more details.

    \subsection{Connection to reduced Gromov-Witten invariants}
    
    Another way to define refined invariants for abelian surfaces is to consider some specific generating series of reduced Gromov-Witten invariants. It is also possible to get a close formula, and check that the definition coincides with the tropical one. The connection between refined invariants and Gromov-Witten invariants was established in the toric case by Bousseau \cite{bousseau2019tropical}.


    \subsubsection{Reduced Gromov-Witten invariants}
    
    Let $A$ be a complex abelian surface, i.e. a complex torus endowed with a realizable curve class $\beta\in H_2(A,\ZZ)$. We can consider the moduli space of stable maps $\M_{g,n}(A,\beta)$ of genus $g$ stable maps with $n$ marked points realizing the homology class $\beta$. Following \cite{bryan2018curve}, it is endowed with maps
    $$\ev\colon\M_{g,n}(A,\beta)\to A^n, \quad \ft\colon\M_{g,n}(A,\beta)\to\overline{\M}_{g,n},$$
    as well as a \textit{reduced virtual class} $[\M_{g,n}(A,\beta)]^\mathrm{red}$ of dimension $g+n$ which plays the role of the fundamental class if the moduli space was a manifold \cite{bryan2018curve}. Reduced Gromov-Witten invariants are obtained by capping the pull-back of cohomology classes by $\ev$ and $\ft$ with the reduced class: if $\alpha\in H^*(\overline{\M}_{g,n},\QQ)$ and $\gamma_i\in H^*(A,\QQ)$,
    $$\bra{\alpha;\gamma_1,\dots,\gamma_n}_{g,\beta} = \int_{[\M_{g,n}(A,\beta)]^\mathrm{red}} \ft^*\alpha \cup\prod_1^n\ev_i^*\gamma_i.$$
    The reduced class is invariant by deformation, ensuring that these numbers depend on $\beta$ only through its divisibility (as an element in the lattice $H_2(A,\ZZ)$) and its self-intersection $\beta^2$.


    \subsubsection{Generating series of invariants with a $\lambda$-class}
    
    For the case of interest, we take $n=g_0$ and each $\gamma_i\in H^4(A,\QQ)$ to be the class Poincar\'e dual to a point. For the class coming from $\overline{\M}_{g,n}$, we take $\alpha=\lambda_{g-g_0}$, where the $\lambda$-classes are the Chern classes of the Hodge bundle. Recall that the Hodge bundle is the rank $g$ vector bundle whose fiber above a stable map $f\colon C\to A$ is $H^0(C,\omega_C)$. We then consider the following generating series:
    $$BG_{g_0,\beta}^\CC(u) = \sum_{g\geqslant g_0}(-1)^{g-g_0}\bra{\lambda_{g-g_0};\pt^{g_0}}_{g,\beta}u^{2g-2}.$$
    We then do the change of variable $q=e^{iu}$, or equivalently $q-q^{-1}=2i\sin(u)$, to get a Laurent series in $q$. These generating series are already considered in \cite{bryan2018curve}.


    \subsubsection{Explicit computation}
    
    When the curve class $\beta$ is primitive, the generating series is computed by \cite[Theorem 2]{bryan2018curve}. It is also possible to compute them using the reduced decomposition formula from \cite{blomme2025multiple} to get a closed expression.

    For non-primitive classes, the computation is enabled by the multiple cover formula \cite[Theorem 4.23]{blomme2025multiple}. The connection to the multiple cover formula for tropical refined invariants through the change of variable $q=e^{iu}$ is explained in \cite[Section 1.2.3]{blomme2022abelian3}.

    Using both of the above, the explicit computation recovers the tropical refined invariants: if $\beta$ and $B$ have the same divisibility and $\beta^2=2\det(B)$, then $BG_{g_0,\beta}^\CC(u)=BG_{g_0,B}(q)$ after the change of variable $q=e^{iu}$. In particular, the Laurent series obtained at the beginning of the section is in fact a Laurent polynomial of degree $\frac{\beta^2}{2}$, which is not obvious from its definition through the change of variable.

    \begin{rem}
        In the complex case as well we may consider curves belonging to a fixed linear system. According to \cite[Corollary 2.2]{bryan1999generating}, the fixed linear system condition can be traded to the insertions of a basis of $H^3(A,\ZZ)$. Using \cite[Proposition 2]{bryan2018curve}, the computation in the case of the insertion of a $\lambda$-class and point insertions reduces to the invariants previously handled in the section. Denoting with a $\star$ the generating series for invariants in a fixed linear system, we also get the relation
        $BG_{g_0,\beta}^{\CC,\star}(u) = \frac{\beta^2/2}{g_0(g_0-1)} \cdot BG^\CC_{g_0,\beta}(u)$. We thus do not expand more on this setting.
    \end{rem}

\section{Polynomiality properties and asymptotics} \label{sec-polyandasympt}

    \subsection{Polynomiality}
    
    In \cite[Section 6.3, Theorem 6.7]{blomme2022abelian3}, we have two observations on the coefficients of the refined invariants $BG_{g,B}(q)$, recapped in the following theorem. If $P$ is a Laurent polynomial of degree $d$, we denote by $\bra{P}_i$ the \textit{codegree} $i$ coefficient, i.e. the coefficient in front of $q^{d-i}$.

    \begin{theo}\cite[Theorem 6.7]{blomme2022abelian3}
    \label{theo:polynomiality}
        For $i\geqslant 0$, for $n=\det(B)$ big enough, we have the following:
        \begin{enumerate}
            \item $\bra{BG_{g,B}(q)}_i = \langle BG_{g,\widetilde{B}}(q)\rangle_i$;
            \item $n\mapsto\bra{BG_{g,n}(q)}_i$ coincides with a polynomial function $Q_{g,i}(n)$ of degree at most $g-2$.
        \end{enumerate}
    \end{theo}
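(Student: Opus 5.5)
Both statements will be read off from the two explicit formulas recalled above: the multiple cover formula and the closed formula for $BG_{g,n}$.

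\emph{Part (1).} Write $n=\det(B)$. By the multiple cover formula,
$$BG_{g,B}(q)=BG_{g,\widetilde B}(q)+\sum_{k|B,\,k\geqslant 2}k^{2g-1}BG_{g,\widetilde{B/k}}(q^k).$$
The first term has degree $n$. For $k\geqslant 2$, the class $B/k$ has $\det(B/k)=n/k^2$, so $BG_{g,\widetilde{B/k}}(q^k)$ has degree $k\cdot n/k^2=n/k\leqslant n/2$. Its nonzero monomials therefore have codegree at least $n-n/2=n/2$. Hence for $n>2i$ these terms do not affect the codegree $i$ coefficient, and $\bra{BG_{g,B}}_i=\bra{BG_{g,\widetilde B}}_i$.

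\emph{Part (2): normalising the formula.} We use
$$BG_{g,n}=g\sum_{a_1+\cdots+a_{g-1}=n}\prod_j P_{a_j}.$$
Normalise each factor as
$$q^{-a}P_a(q)=\sum_{k|a}\tfrac{a}{k}\bigl(q^{k-a}-2q^{-a}+q^{-k-a}\bigr).$$
Set $x=q^{-1}$. The top term ($k=a$) gives $1$. Every other term has $x$-exponent at least $a-a/2=a/2$. So $\widetilde P_a(x):=x^{a}P_a(x^{-1})=1+R_a(x)$, where $R_a$ has $x$-valuation at least $\lceil a/2\rceil\geqslant 1$. Also $R_1=-2x+x^2$, so it is not zero. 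The codegree $i$ coefficient of $BG_{g,n}$ is then
$$[x^i]\; g\sum_{a_1+\cdots+a_{g-1}=n}\prod_j\bigl(1+R_{a_j}(x)\bigr).$$

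\emph{Part (2): polynomiality.} Expand the product over the subset $J\subseteq\{1,\dots,g-1\}$ of indices where we pick $R_{a_j}$. Only terms with $\sum_{j\in J}\lceil a_j/2\rceil\leqslant i$ contribute to $[x^i]$. So the $a_j$ with $j\in J$ range over a finite set independent of $n$, and $|J|\leqslant i$.

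For fixed $J$ and fixed values $(a_j)_{j\in J}$ with sum $s$, the remaining $a_j$ are free positive integers summing to $n-s$. Their number is
$$\binom{n-s-1}{g-2-|J|},$$
which is a polynomial in $n$ of degree $g-2-|J|$ once $n-s\geqslant g-1-|J|$, i.e. for $n$ large. The coefficient $[x^i]$ of the finite product $\prod_{j\in J}R_{a_j}$ does not depend on $n$.

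Summing over $J$ and the finitely many admissible tuples gives a finite sum of polynomials. Hence $\bra{BG_{g,n}}_i=Q_{g,i}(n)$ for $n$ large. Its degree is at most $g-2$, with the top contribution $J=\emptyset$ giving $g\binom{n-1}{g-2}$. This matches the expected leading term after multiplying by $n/(g(g-1))$ to pass to the $\star$-invariant.

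\emph{Main obstacle.} The only delicate point is the bookkeeping: checking that the valuation bound on $R_a$ really makes the set of contributing tuples finite and independent of $n$. The bound $\operatorname{val}R_a\geqslant a/2$ from the $k\leqslant a/2$ divisors does exactly this, so I expect no real difficulty there.
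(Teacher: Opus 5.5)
Your proposal is correct and is essentially the paper's argument: part (1) is the same degree comparison $n/k\leqslant n/2$ in the multiple cover formula, and part (2) is the same expansion $\bar P_a=1+(\bar P_a-1)$ over subsets, truncated using $\bar P_a\equiv 1 \bmod x^{i+1}$ for $a>2i$, with the free coordinates counted by binomial coefficients, which is exactly the computation of Lemma~\ref{lem-sommeproduitPa} and the proof of Theorem~\ref{theorem-abelian-codeg-series} that the paper's proof refers to. The only small inaccuracy is that the $J=\emptyset$ term contributes $g\binom{n-1}{g-2}$ only for $i=0$; for $i\geqslant 1$ it vanishes, so $Q_{g,i}$ then has degree at most $g-3$, which does not affect the bound you state.
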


In particular, Theorem \ref{theo:polynomiality}(1) states than studying the asymptotic of coefficients, we may restrict to primitive classes.

    \begin{proof}
        \begin{enumerate}
            \item We use the multiple cover formula:
            $$BG_{g,B}(q) = \sum_{k|B}k^{2g-1}BG_{g,\widetilde{B/k}}(q^k).$$
            For $k|B$, the degree of the $k$-summand is $k\cdot\det\widetilde{B/k} = k\cdot\frac{n}{k^2}=\frac{n}{k}$. So as soon as $k\geqslant 2$,
            $$\frac{n}{k} \leq n-\frac{n}{2},$$
            and its codegree is at least $\frac{n}{2}$. Provided $n$ is big enoughn this ensures that the codegree $i$ coefficient only comes from the $k=1$ term, i.e. the term associated to the primitive class $\widetilde{B}$.
            \item The second statement comes from the explicit expression of the invariant. It also follows from the more explicit computations carried out in Section \ref{sec:proofs}.
        \end{enumerate}
    \end{proof}

    The above theorem is an abelian surface version of the polynomiality statement of usual Block-G\"ottsche invariants for toric surfaces \cite{brugalle-2022-polynomiality}.

    \subsection{From Laurent polynomials to polynomials} \label{subsec-laurenttopoly}

Adopting a point of view similar to \cite{blommemevel2025asymptotic}, we reformulate the polynomiality result from Theorem \ref{theo:polynomiality} as an asymptotic statement on the function $B\mapsto BG_{g,B}(q)$. To do so, we do a change of variable to transform the Laurent polynomial in $q$ into a polynomial in $x$ of degree $2\det(B)$. The formula consists in a mere shift of coefficients, so that the codegree $i$ coefficient now becomes the $x^i$-coefficient:
$$\overline{BG}_{g,B}(x) = x^{\det(B)}BG_{g,B}\left(\frac{1}{x}\right)\in\ZZ[x].$$
More generally, if $P\in\ZZ[q^{\pm 1}]$ is a Laurent polynomial in the variable $q$ of degree $d$, we set
$$\bar{P}(x)=x^dP\left(\frac{1}{x}\right),$$
which is now a true polynomial of degree $2d$ in the $x$ variable. The map $P(q)\in\ZZ[q^{\pm 1}]\mapsto\bar{P}(x)$ is multiplicative (and linear on Laurent polynomials of the same degree).

We now have the formula
\[ \overline{BG}_{g,n}(x) = g\sum_{a_1+\cdots+a_{g-1}=n}\bar{P}_{a_1}\cdots\bar{P}_{a_{g-1}}, \quad \text{where }\bar{P}_a(x) = \sum_{k|a}\frac{a}{k}x^{a-k}(1-2x^k+x^{2k}).\]

\subsection{Asymptotics} \label{subsec-asymptotics}

For a fixed $g\geqslant 2$, we consider the refined invariant as a function
$$\overline{BG}_g\colon\BBB\to\QQ[\![x]\!],$$
where the codomain $\QQ[\![x]\!]$ is the ring of formal series in $x$, endowed with the ultrametric distance: $d(f(x),g(x))= e^{-M}$ where
$$f(x)-g(x)=\alpha_Mx^M+o(x^M),\ \alpha_M\neq 0.$$
Thus, it makes sense to speak about asymptotic development of $\overline{BG}_g$ when $\det(B)$ is big enough. The benefit of considering formal series in $x$ rather than just polynomials is that the codomain is a complete space. Of course, the values of $\overline{BG}_g$ are actually polynomials.

\begin{prop}
    There is a polynomial function $AR_g\colon\NN\to\QQ[\![x]\!]$ (where ``AR'' stands for Asymptotic Refined) of degree at most $g-2$, equivalently an element of $\QQ[\![x]\!][n]$ such that inside $\QQ[\![x]\!]$, such that
    $$\overline{BG}_{g,B}(x) = AR_g(\det(B),x)+o(1).$$
    The $o(1)$ is the Landeau notation using the ultrametric topology on $\QQ[\![x]\!]$. Concretely, this means that for any $i_0>0$, we have equality of the $x^i$-coefficients for $0\leqslant i\leqslant i_0$, provided that $\det(B)$ is big enough.    
\end{prop}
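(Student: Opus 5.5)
The idea is to assemble $AR_g$ coefficient by coefficient from the polynomials $Q_{g,i}$ of Theorem~\ref{theo:polynomiality}. Set
$$AR_g(n,x)=\sum_{i\geqslant 0} Q_{g,i}(n)\,x^i\in\QQ[\![x]\!][n].$$
For this to be an element of $\QQ[\![x]\!][n]$ we need a degree bound in $n$ that is uniform in $i$. Theorem~\ref{theo:polynomiality}(2) supplies it: each $Q_{g,i}$ has degree at most $g-2$. Writing $Q_{g,i}(n)=\sum_{l=0}^{g-2}c_{i,l}n^l$, we get
$$AR_g(n,x)=\sum_{l=0}^{g-2}\Bigl(\sum_i c_{i,l}x^i\Bigr)n^l,$$
which is a polynomial in $n$ of degree at most $g-2$ with coefficients in $\QQ[\![x]\!]$.

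It remains to check the asymptotic equality, and it suffices to compare finitely many coefficients at a time. Fix $i_0$. The codegree-$i$ coefficient $\bra{BG_{g,B}(q)}_i$ is exactly the $x^i$-coefficient of $\overline{BG}_{g,B}(x)$, because the bar map is just a shift of coefficients. For $0\leqslant i\leqslant i_0$ we have:
\begin{itemize}
\item by Theorem~\ref{theo:polynomiality}(1), for $\det(B)$ large, $\bra{BG_{g,B}}_i=\langle BG_{g,\widetilde B}\rangle_i=\bra{BG_{g,\det B}}_i$, since for primitive classes the invariant depends only on $\det(B)$;
\item by Theorem~\ref{theo:polynomiality}(2), for $\det(B)$ large, this equals $Q_{g,i}(\det B)$.
\end{itemize}
There are only finitely many indices $i\leqslant i_0$, so we may take the maximum of the resulting thresholds. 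Beyond that threshold, $\overline{BG}_{g,B}(x)-AR_g(\det B,x)$ lies in $x^{i_0+1}\QQ[\![x]\!]$. This is precisely the meaning of $o(1)$ in the ultrametric topology.

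The only step that needs care is the uniformity in $i$ of the degree bound, since without it $AR_g$ would fail to be polynomial in $n$. The theorem as stated already gives this bound. If one wanted it independently, one would expand $g\sum_{a_1+\cdots+a_{g-1}=n}\bar P_{a_1}\cdots\bar P_{a_{g-1}}$ and note the following. The $x^i$-coefficient only sees configurations in which all but boundedly many $a_j$ contribute their lowest-order term $a_jx^{a_j-1}$, and summing over compositions then yields a polynomial in $n$ of degree at most $g-2$, as the number of parts minus one.
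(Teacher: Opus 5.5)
Your argument is correct and is exactly the paper's proof: set $AR_g(n,x)=\sum_i Q_{g,i}(n)x^i$, use the uniform bound $\deg Q_{g,i}\leqslant g-2$ from Theorem~\ref{theo:polynomiality}(2) to land in $\QQ[\![x]\!][n]$, and combine parts (1) and (2), taking the maximum of the finitely many thresholds for $i\leqslant i_0$. One slip in your optional final paragraph, which the proof does not use: the lowest-order term of $\bar P_a$ is the constant $1$ (from the $k=a$ summand), not $a x^{a-1}$, and it is these constant terms that produce the binomial counts $\binom{n-1-\sum_{j\leqslant s} a_j}{g-2-s}$ of degree at most $g-2$ in $n$.
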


\begin{proof}
    This is just a reformulation of Theorem \ref{theo:polynomiality}, taking $AR_g(n,x)=\sum_{i=0}^\infty Q_{g,i}(n)x^i$.
\end{proof}

\begin{rem}
One other way to formulate the asymptotic development is that for every fixed $i,g$ and $n$ chosen big enough with respect to $i$ and $g$, we have the congruence
$$\overline{BG}_{g,B}(x)\equiv AR_g(n,x) \mod x^{i+1}.$$
Thus, to compute $AR_g$, it suffices to compute $\overline{BG}_{g,B}(x)$ modulo $x^{i+1}$ for $n=\det(B)$ big enough and any $i$. We refer to the examples section for concrete computations.
\end{rem}

We also have an asymptotic refined invariant $AR_g^\star$, associated to $\bar{BG}^\star_{g,B}(q)$ counting curves in a fixed linear system. The relation to $AR_g$ is just a multiplication by $\frac{n}{g(g-1)}$.

\subsection{Statement of the result} \label{subsec-statement}

We now state the main regularity result concerning the asymptotic refined invariant. For aesthetic reasons, we state the result for $AR_g^\star$.

\begin{theo}\label{theorem-abelian-codeg-series}
    For fixed genus $g$, there exists quasi-modular forms $f_1,\dots,f_{g-3}$ vanishing at $0$ such that
    $$ AR_g^\star(n,x)= \bino{n}{g-1}+\sum_{k=1}^{g-3} f_k(x)n^k \in \QQ[\![x]\!][n].$$
\end{theo}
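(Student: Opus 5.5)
The plan is to expand the explicit formula for $\overline{BG}_{g,n}(x)$ from Section~\ref{subsec-laurenttopoly} around the constant polynomial $1$. Write $\bar P_a(x)=1+R_a(x)$. The term $k=a$ contributes $(1-x^a)^2$ and the terms $k<a$ contribute $\frac ak x^{a-k}(1-x^k)^2$ with $a-k\geqslant a/2$. Hence $R_a$ has $x$-adic valuation at least $a/2$ (and at least $1$), so $R_a\to 0$ in $\QQ[\![x]\!]$.

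Expanding the product $\prod_j(1+R_{a_j})$ and grouping by the set $S$ of indices where we pick $R$, with $|S|=s$, gives an exact identity. The parts outside $S$ form an arbitrary composition of $n-m$ into $g-1-s$ positive parts, where $m=\sum_{j\in S}a_j$, and there are $\binom{n-m-1}{g-2-s}$ of these. Modulo $x^{i+1}$ only finitely many tuples $(a_j)_{j\in S}$ contribute, so for $n$ large the binomial is honestly polynomial in $n$. This yields
\[
AR_g(n,x)=g\sum_{s=0}^{g-2}\binom{g-1}{s}\sum_{a\in\NN_{\geqslant1}^s}\Big(\prod_{j=1}^s R_{a_j}(x)\Big)\binom{n-|a|-1}{g-2-s}.
\]
Multiplying by $\frac{n}{g(g-1)}$, the $s=0$ term gives exactly $\frac{n}{g-1}\binom{n-1}{g-2}=\binom{n}{g-1}$, which is the announced leading part. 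Every term with $s\geqslant1$ is divisible by $n$ and has valuation at least $1$ in $x$. This explains why the sum starts at $k=1$ and why each $f_k$ vanishes at $0$.

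Next, expand $\binom{n-m-1}{r}$ as a polynomial in $n$ whose coefficients are polynomials in $m=a_1+\cdots+a_s$. Expanding $m^p$ multinomially shows that the coefficient of each $n^l$ is a $\QQ$-linear combination of products $F_{p_1}\cdots F_{p_s}$, where
\[
F_p(x)=\sum_{a\geqslant1}a^pR_a(x).
\]
Since quasi-modular forms form a ring, it suffices to prove two things:
\begin{itemize}
\item each $F_p$ is quasi-modular (in $x$, playing the role of $q$), which is the content I expect for Lemma~\ref{lemma-Gm-quasi-modular};
\item $F_0=0$.
\end{itemize}
The second point kills the $n^{g-2}$ term of $AR^\star_g$, which only comes from $s=1$ with leading coefficient proportional to $F_0$, and leaves degrees $1,\dots,g-3$ as claimed.

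To compute $F_p$, I would write $a=kd$ and substitute $e=d-1$ in the $k<a$ terms, which gives
\[
\sum_{k\geqslant1,\,e\geqslant1}k^p(e+1)^{p+1}x^{ke}\,(1-2x^k+x^{2k}).
\]
Shifting $e$ turns the factor $(1-x^k)^2$ into a second finite difference in $e$ of $(e+1)^{p+1}$. What remains is a finite combination of double sums $\sum_{k,e}k^{\alpha}e^{\beta}x^{ke}$, together with boundary terms like $\sum_k k^p x^k$ that combine with the $-2x^a+x^{2a}$ coming from $k=a$. Each double sum $\sum_{k,e}k^\alpha e^\beta x^{ke}$ equals $D^{\beta}\sum_{N}\sigma_{\alpha-\beta}(N)x^N$ (up to reindexing, with $D=x\frac{d}{dx}$), i.e.\ a derivative of an Eisenstein series $G_{\alpha-\beta+1}$. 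The key steps are then: first verify $F_0=0$ by direct computation; then check that after the finite-difference simplification, only pairs $(\alpha,\beta)$ with $\alpha-\beta$ odd and positive survive, together with possibly $G_2$, which is quasi-modular.

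The main obstacle is this parity and cancellation check. A priori the expansion produces sums $\sum\sigma_{2r}(N)x^N$, which are not quasi-modular, as well as non-modular boundary pieces. I would first compute $F_0,F_1,F_2$ by hand to see the pattern. The structural point I would aim to exploit is the symmetry of the second difference under $e\mapsto -e$: a symmetric second difference of $(e+1)^{p+1}$ is a polynomial with a definite parity, which should force the correct parity of $\beta$ relative to $p$. If this fails, I would instead organize $F_p$ as $D$-derivatives of $\sum_k k^{p}\frac{x^k}{(1-x^k)^2}$-type Lambert series, which are standard quasi-modular building blocks.
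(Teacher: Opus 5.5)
Your proposal is correct and follows the paper's proof essentially step for step. The shared steps are: the expansion $\bar P_a=1+R_a$ grouped by the number $s$ of $R$-factors; the composition count $\binom{n-1-|a|}{g-2-s}$, which is polynomial in $n$ for $n$ large; the reduction of each $n^l$-coefficient to products of the moment series $F_p$ (the paper's $G_m$); and $F_0=0$ killing the $n^{g-2}$ term. Your symmetric-second-difference parity idea is exactly the paper's argument for its quasi-modularity lemma, and it works with no fallback needed: the boundary terms at $e=1,2$ cancel exactly against $a^p(-2x^a+x^{2a})$, so $F_p=\sum_{k,e\geqslant 1}k^p\bigl((e+1)^{p+1}-2e^{p+1}+(e-1)^{p+1}\bigr)x^{ke}=2\sum_{j\geqslant 1}\binom{p+1}{2j}D^{p+1-2j}E_{2j}$, which also gives $F_0=0$ immediately.
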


Before proving the theorem, as an illustration, we provide the first values of the asymptotic refined invariant. Explicit computation can be found in Section \ref{subsec-firstvalues}.

\begin{prop}\label{prop-firstvalues}
    We have the following expressions:
    \begin{enumerate}[label=(\roman*)]
        \item $AR_2^\star(n,x) = n$;
        \item $AR_3^\star(n,x) = \displaystyle\binom{n}{2}$;
        \item $AR_4^\star(n,x) = \displaystyle\binom{n}{3}-2E_2(x)n$, where $E_2(x)=\sum_{a=1}^\infty \sigma_1(a)x^a$.
    \end{enumerate}
\end{prop}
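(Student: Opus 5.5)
The plan is to compute $AR_g$ directly from the formula
\[\overline{BG}_{g,n}(x)=g\sum_{a_1+\cdots+a_{g-1}=n}\bar P_{a_1}\cdots\bar P_{a_{g-1}},\]
working modulo $x^{i+1}$ with $n$ large. Then I use $AR^\star_g=\frac{n}{g(g-1)}AR_g$.

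\emph{Truncating the $\bar P_a$.} Write $\bar P_a=1+R_a$. In $\bar P_a$, the summand $k=a$ equals $(1-x^a)^2=1-2x^a+x^{2a}$. The summands with $k|a$, $k<a$ are $O(x^{a-k})$, and there $a-k\geqslant a/2$. Hence $R_a=O(x^{\lceil a/2\rceil})$.

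\emph{Expanding the product.} I expand $\prod_j(1+R_{a_j})$ over the subset $S\subset\{1,\dots,g-1\}$ of indices where $R$ is chosen. For fixed $S$ and fixed $(a_j)_{j\in S}$, the free parts run over compositions of $m=n-\sum_S a_j$ into $g-1-|S|$ positive parts. There are $\binom{m-1}{g-2-|S|}$ of them.

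\emph{Restricting to $|S|\leqslant g-2$.} If $|S|=g-1$, some $a_j\geqslant n/(g-1)$, so the term vanishes modulo $x^{i+1}$ for $n$ large. The same argument justifies extending the sums over $(a_j)_{j\in S}$ to all of $\NN_{\geqslant 1}$: the error is $o(1)$ in the $x$-adic topology. Consequently $AR_g$ is a finite combination of the moments $\sum_a a^r R_a(x)$ (and products of such sums), with polynomial coefficients in $n$.

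\emph{The two moments.} I compute them by writing $a=km$ and summing first over $m$, with $y=x^k$.
\begin{itemize}
\item First moment. Attach the ``$-1$'' in $R_a$ to the term $m=1$. Then for each $k$ the inner sum is
\[(1-y)^2\sum_m m y^{m-1}-1=0,\]
so $S_0:=\sum_a R_a=0$.
\item Second moment. Similarly,
\[S_1:=\sum_a aR_a=\sum_k k\Bigl((1-y)^2\sum_m m^2y^{m-1}-1\Bigr)=\sum_k k\Bigl(\frac{1+y}{1-y}-1\Bigr)=2\sum_k\frac{kx^k}{1-x^k}=2E_2(x).\]
\end{itemize}

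\emph{Main obstacle.} The step needing care is justifying these rearrangements. For fixed $k$, all terms are $O(x^k)$, so the double series converges $x$-adically and may be summed in any order. The same fact gives uniformity in $n$ of the truncation.

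\emph{Case $g=2$.} We have $\overline{BG}_{2,n}=2\bar P_n\equiv 2$, so $AR_2=2$ and $AR_2^\star=n$.

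\emph{Case $g=3$.} We get
\[AR_3=3\bigl((n-1)+2S_0\bigr)=3(n-1),\]
since the cross term $R_aR_b$ is negligible because $a+b=n$. Hence $AR_3^\star=\binom n2$.

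\emph{Case $g=4$.}
\begin{itemize}
\item The $S=\emptyset$ term gives $\binom{n-1}{2}$.
\item The three singletons give
\[3\sum_a R_a\,(n-a-1)=3\bigl((n-1)S_0-S_1\bigr)=-6E_2.\]
\item The three pairs give $3S_0^2=0$, since the third part $n-a-b$ is large.
\end{itemize}
Thus $AR_4=4\bigl(\binom{n-1}{2}-6E_2\bigr)$. Multiplying by $\frac{n}{12}$ gives $AR_4^\star=\binom n3-2E_2(x)\,n$, as claimed.
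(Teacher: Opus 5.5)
Your proposal is correct and follows essentially the same route as the paper. You split $\bar P_a=1+(\bar P_a-1)$, discard the term where every factor is $\bar P_{a_j}-1$, extend the remaining sums to all $a_j\geqslant 1$ using $x$-adic convergence, and conclude with $\sum_a(\bar P_a-1)=0$ and $\sum_a a(\bar P_a-1)=2E_2(x)$. The differences are cosmetic: you work with $AR_g$ and rescale by $\frac{n}{g(g-1)}$ at the end, and you evaluate the two moments directly from the closed forms $\sum_m m y^{m-1}=(1-y)^{-2}$ and $\sum_m m^2 y^{m-1}=(1+y)(1-y)^{-3}$ in $y=x^k$, whereas the paper obtains them as the cases $m=0,1$ of its general index-shift computation of $G_m$.
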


In particular, the first two asymptotic refined invariants have no dependence in $x$, meaning all $Q_{g,i}$ are $0$ except $Q_{g,0}$. The case $g=4$ is the first value for which we see a quasi-modular form appear, the sum in Theorem \ref{theorem-abelian-codeg-series} being $0$ for $g=2,3$.

\section{Proof of Theorem \ref{theorem-abelian-codeg-series} on regularity properties of asymptotic invariants}
\label{sec:proofs}

In this section we study the asymptotic invariant $AR_g^\star (x)$ to prove Theorem \ref{theorem-abelian-codeg-series}. The proof being technical, we first deal with the particular cases of Proposition \ref{prop-firstvalues} to illustrate the technique. We hope the details given below in Section \ref{subsec-firstvalues} explain how to tackle the general case in Section \ref{subsec-mainproof}. We will heavily use the formula
\begin{equation}\label{formule-BGbar}
    \overline{BG}_{g,n}^\star(x) = \dfrac{n}{g-1}\sum_{a_1+\cdots+a_{g-1}=n}\bar{P}_{a_1}\cdots\bar{P}_{a_{g-1}}, \quad \text{where }\bar{P}_a(x) = \sum_{k|a}\frac{a}{k}x^{a-k}(1-2x^k+x^{2k}).
\end{equation}

\subsection{Computation of the first values} \label{subsec-firstvalues}

\begin{proof}[Proof of Proposition \ref{prop-firstvalues}(i)]
We start with the easiest case $g=2$. By Formula (\ref{formule-BGbar}) we have $\overline{BG}^\star_{2,n}(x) = n\bar{P}_n(x)$. Moreover, if $n>2i$ and $k$ is a strict divisor of $n$, we have $n-k\geqslant\frac{n}{2}>i$. In particular, only the $k=n$ terms contribute to the first $i$ coefficients of $P_n(x)$, so that $\overline P_n(x)\equiv 1\mod x^{i+1}$. Hence, all the coefficients are asymptotically $0$ except the constant term, and we get
    $$AR_2^\star (n,x)=n.$$
\end{proof}
    
\begin{proof}[Proof of Proposition \ref{prop-firstvalues}(ii)]
We continue with $g=3$. By Formula (\ref{formule-BGbar}) the invariant is given by
    $$ \bar{BG}_{3,n}^\star(x) = \frac{n}{2}\sum_{a_1+a_2=n}\bar P_{a_1} \bar P_{a_2}.$$
If $n>4i$, then either $a_1 > 2i$ or $a_2 > 2i$ and we have $\bar P_{a_j}\equiv 1\mod x^{i+1}$ for $j=1$ or $2$. Thus, we split the sum as follows:
 \begin{align*}
       \bar{BG}_{3,n}^\star(x) 
        & \equiv \frac{n}{2} \left( \sum_{\substack{a_1+a_2=n \\ a_1\leqslant 2i}}\bar P_{a_1} + \sum_{\substack{a_1+a_2=n \\ a_1,a_2>2i}}1 + \sum_{\substack{a_1+a_2=n \\ a_2\leqslant 2i}}\bar P_{a_2} \right) \mod x^{i+1}.
    \end{align*}
Next, we would like to remove the condition $a_1,a_2>2i$ from the middle sum. To do so, we write each of the $\bar{P}_a$ as $(\bar{P}_a-1)+1$ and incorporate the $1$ terms to the middle sum:
    \[   \bar{BG}_{3,n}^\star(x) \equiv \frac{n}{2} \left( \sum_{\substack{a_1+a_2=n \\ a_1\leqslant 2i}}(P_{a_1}-1) + \sum_{a_1+a_2=n}1 + \sum_{\substack{a_1+a_2=n \\ a_2\leqslant 2i}}(P_{a_2}-1) \right)  \mod x^{i+1}. \]
Using again that $\bar P_{a_j}\equiv 1\mod x^{i+1}$ for $a_j>2i$, we are able to add the missing terms to the first and third sums, ultimately yielding
\begin{align*}
           \bar{BG}_{3,n}^\star(x) &\equiv \frac{n}{2} \left( \sum_{a_1=1}^{+\infty} (P_{a_1}-1) + n-1 + \sum_{a_2=1}^{+\infty} (P_{a_2}-1) \right)  \mod x^{i+1}  \\
        &\equiv \frac{n}{2} \left( n-1 + 2\sum_{a=1}^{+\infty} (P_a-1) \right) \mod x^{i+1}.
    \end{align*}
    Since the right-hand side does snot depend on $i$ and the equality is valid modulo any $x^{i+1}$, the series is the sought $AR^\star_3(x)$. Notice that $d(P_a-1,0)$ goes to $0$ when $a$ goes to $\infty$, so that $\sum_{a=1}^\infty (P_a-1)$ makes sense in the ring of formal series $\QQ[\![x]\!]$. An elementary computation, which is part of Lemma \ref{lemma-Gm-quasi-modular} proven hereby after, one has $\sum_{a=1}^\infty (P_a-1)=0$, and we get the expected result:
    \[ AR_3^\star (n,x)= \frac{n(n-1)}{2} = \binom{n}{2} . \]
\end{proof}

\begin{proof}[Proof of Proposition \ref{prop-firstvalues}(iii)]
Last, we compute the asymptotic invariant for $g=4$. By Formula (\ref{formule-BGbar}) the invariant is given by
    \[ \bar{BG}_{4,n}^\star(x) = \frac{n}{3}\sum_{a_1+a_2+a_3=n}\bar P_{a_1}\bar P_{a_2}\bar P_{a_3}.\]
We apply the same method: the sum is over the integral points of the triangle
$$\Delta_n=\{(a_1,a_2,a_3)\ |\ a_1+a_2+a_3=n \text{ and } a_j\geq1\}.$$
We split each $\bar{P}_a$ as $(\bar{P}_a-1)+1$, yielding
    $$\prod_{i=1}^{g-1}\bar{P}_{a_i} = \sum_{I\subset [\![1;g-1]\!]}\prod_{i\in I}(\bar{P}_{a_i}-1).$$
    In the $g=4$ case, we get
    \begin{equation}\label{genus4-decompo}
        \bar{P}_{a_1}\bar{P}_{a_2}\bar{P}_{a_3} = \begin{array}{l}
         (\bar{P}_{a_1}-1)(\bar{P}_{a_2}-1)(\bar{P}_{a_3}-1) \\
         + (\bar{P}_{a_1}-1)(\bar{P}_{a_2}-1)+(\bar{P}_{a_2}-1)(\bar{P}_{a_3}-1)+(\bar{P}_{a_1}-1)(\bar{P}_{a_3}-1) \\
         + (\bar{P}_{a_1}-1)+(\bar{P}_{a_2}-1)+(\bar{P}_{a_3}-1) \\
         + 1.
        \end{array}
    \end{equation}
    We thus get a sum of eight functions over $\Delta_n$, some of them playing a symmetric role up to permuting the indices. In each case, since $\bar{P}_a-1\equiv 0 \mod x^{i+1}$ if $a>2i$, only the $(a_1,a_2,a_3)$ where the $a_j$ appearing in the expression are smaller than $2i$ contribute. hence, the idea is to split the triangle according to how close the points are to the boundary and corners of $\Delta_n$, because this forces the vanishing of the function. Assume $i\geqslant 0$ and $n>6i$.
    \begin{itemize}
        \item First, we have
        $$\sum_{\substack{a_1+a_2+a_3=n \\ a_1,a_2,a_3\leqslant 2i}}(\bar{P}_{a_1}-1)(\bar{P}_{a_2}-1)(\bar{P}_{a_3}-1) = 0,$$
        since due to $n>6i$, the summation set is actually empty.
        \item Then, for each of the three functions appearing in the third row of (\ref{genus4-decompo}), we have
        \begin{align*}
          \sum_{\substack{a_1+a_2+a_3=n \\ a_1\leqslant 2i}}(\bar{P}_{a_1}-1) = &   \sum_{a_1=1}^{2i}(\bar{P}_{a_1}-1)\left(\sum_{a_2+a_3=n-a_1}1\right) \\
          = & \sum_{a_1=1}^{2i}(n-1-a_1)(\bar{P}_{a_1}-1) \\
          \equiv & (n-1)\sum_{a=1}^\infty (\bar{P}_a-1) - \sum_{a=1}^\infty a(\bar{P}_a-1) \mod x^{i+1}.
        \end{align*}
        To get to the last row, we just added the terms for $a_1>2i$ since they do not change the value of the sum modulo $x^{i+1}$. Elementary computations proven in Lemma \ref{lemma-Gm-quasi-modular} yield that $\sum_{a=1}^\infty a(\bar{P}_a-1)=2E_2(x)$, where $E_2(x)=\sum_1^\infty \sigma_1(a)x^a$ is the first Eisenstein series, normalized to have $0$ first coefficient. We already know that $\sum_1^\infty(\bar{P}_a-1)=0$.
        \item For the function involving two $a_j$ out of the three (second row of (\ref{genus4-decompo}), we proceed similarly:
        \begin{align*}
        \sum_{\substack{a_1+a_2+a_3=n \\ a_1,a_2\leqslant 2i}}(\bar{P}_{a_1}-1)(\bar{P}_{a_2}-1) = & \sum_{a_1,a_2=1}^{2i}(\bar{P}_{a_1}-1)(\bar{P}_{a_2}-1) \\
        = & \left(\sum_{a=1}^\infty (\bar{P}_a-1)\right)^2 \equiv 0 \mod x^{i+1},
        \end{align*}
        using again that $\sum_{a=1}^\infty (\bar{P}_a-1)\equiv 0\mod x^{i+1}$.
        \item Last, we have that
        $$\sum_{a_1+a_2+a_3=n}1 = \binom{n-1}{2}.$$
    \end{itemize}
    In total, we get
    $$\bar{BG}^\star_{4,n}(x) \equiv\frac{n}{3}\left( 0 +3\cdot 0+3(0-2E_2(x))+\binom{n-1}{2} \right) \mod x^{i+1},$$
    yielding the expected result:
    \[ AR_4^\star(n,x) = \frac{n}{3} \left( \binom{n-1}{2} - 6E_2 \right) = \binom{n}{3} - 2nE_2(x). \]
\end{proof}

\subsection{General result}

\subsubsection{An auxiliary family of quasi-modular forms}
Before getting to the proof of Theorem \ref{theorem-abelian-codeg-series}, we introduce a family of generating series $(G_m(x))_m$ that appears throughout the computation, and prove they are quasi-modular forms. The generating series $G_0$ and $G_1$ are the one appearing in the $g=3$ and $g=4$ cases of Proposition \ref{prop-firstvalues}. We consider the function
$$G_m(x)=\sum_{a=1}^{+\infty} a^m(\bar P_a(x)-1)$$
and denote by $E_{2j}$ the Eisenstein series with $0$ constant term, \ie 
\[ E_{2j}(x)=\sum_{a=1}^{+\infty} \sigma_{2j-1}(a)x^a \]
with $\sigma_k(a)=\sum_{d|a}d^k$. 
Let $D$ be the differential operator $D=x\frac{\mathrm{d}}{\mathrm{d}x}$. We recall that if $F(x)$ is a quasi-modular form, then so is $DF(x)$.

\begin{lem}\label{lemma-Gm-quasi-modular}
    The function $G_m(x)$ is a quasi-modular form that vanishes at $0$. Moreover, we have the explicit formula
    $$G_m(x) = 2\sum_{2\leqslant 2j\leqslant m+1}\bino{m+1}{2j} D^{m+1-2j}E_{2j}(x) . $$
\end{lem}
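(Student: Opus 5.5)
We expand $\bar P_a(x)-1$ directly and swap the order of summation. Writing $a=kd$ with $d=a/k$, we have
$$\bar P_a(x)=\sum_{k|a}\frac{a}{k}\bigl(x^{a-k}-2x^{a}+x^{a+k}\bigr).$$
The middle terms sum to $-2\sigma_1(a)x^a$. For the outer terms we set $a=kd$, so they become $d\,x^{k(d-1)}$ and $d\,x^{k(d+1)}$. The summand $k=a$ ($d=1$) of the first family contributes exactly $1$, which cancels the $-1$ in $\bar P_a-1$. Hence
$$G_m(x)=\sum_{k\geq1}\sum_{d\geq1} k^m d^{m+1}\Bigl(x^{k(d-1)}[d\geq2]-2x^{kd}+x^{k(d+1)}\Bigr).$$
Each family is absolutely convergent in the $x$-adic topology, since only finitely many $(k,d)$ give a fixed power of $x$, so we may treat the three families separately. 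The plan is to reindex each family by its exponent of $x$, the quantity $k\cdot e$ with $e=d-1$, $d$, or $d+1$ respectively.

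After reindexing we get
$$G_m(x)=\sum_{k,e\geq1}k^m x^{ke}\bigl((e+1)^{m+1}-2e^{m+1}+(e-1)^{m+1}\bigr).$$
For the third family $e=d+1\geq2$, but the $e=1$ term there has $(e-1)^{m+1}=0$ (we take $m+1\geq1$), so we may run it over all $e\geq1$. The bracket is a symmetric second difference. By the binomial theorem it equals
$$2\sum_{j\geq1}\bino{m+1}{2j}e^{m+1-2j},$$
because the odd powers cancel and the $j=0$ term cancels against $-2e^{m+1}$. Therefore
$$G_m(x)=2\sum_{2\le 2j\le m+1}\bino{m+1}{2j}\sum_{k,e\ge1}k^{m}e^{m+1-2j}x^{ke}.$$

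We now identify each inner sum as $D^{m+1-2j}E_{2j}$. Write $N=ke$. Then $D^{r}E_{2j}=\sum_{N}N^{r}\sigma_{2j-1}(N)x^N=\sum_{k,e}(ke)^{r}k^{2j-1}x^{ke}$. With $r=m+1-2j$ this is $k^{m}e^{m+1-2j}$, which is exactly the coefficient above. This gives the explicit formula in the lemma.

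Quasi-modularity follows because each $E_{2j}$ with $2j\ge2$ is quasi-modular up to an additive constant, namely the constant term of the normalized Eisenstein series. Constants are modular of weight $0$, and only the $E_2$ case is genuinely quasi-modular. The algebra of quasi-modular forms is stable under $D$, which also kills constants, and $G_m$ is a finite sum of such terms. For the mixed-weight point one should state that quasi-modular is understood as lying in the algebra, as the paper implicitly does. Each $E_{2j}$, and hence each $D^rE_{2j}$, has zero constant term, so $G_m(0)=0$. As a sanity check, $m=0$ gives an empty sum, hence $G_0=0$, and $m=1$ gives $2E_2$, both matching the earlier computations.

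The main obstacle is the careful handling of boundary terms in the reindexing: the $d=1$ term that cancels the $-1$, and the range $e\ge2$ versus $e\ge1$ in the third family. Beyond that, one must justify splitting and reordering sums in $\QQ[\![x]\!]$, which is fine since each coefficient is a finite sum.
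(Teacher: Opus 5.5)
Your proof is correct and follows essentially the same route as the paper: you rewrite $\bar P_a-1$ as a sum over $a=kd$ with the $d=1$ term cancelling the $-1$, shift indices in the two outer families (adding the vanishing $e=1$ term in the third), cancel the odd powers and the $j=0$ term via the binomial formula, and identify $\sum_{k,e}k^m e^{m+1-2j}x^{ke}=D^{m+1-2j}E_{2j}$. Your remark that the zero-constant-term $E_{2j}$ differ from the normalized Eisenstein series by a constant, so that quasi-modularity is meant in the mixed-weight algebra, is a legitimate precision the paper leaves implicit.
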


\begin{expl}
    For small $m$, the explicit expression yields the following identities:
    \begin{itemize}[label=$\circ$]
        \item $G_0(x)=0$,
        \item $G_1(x)=2E_2(x)$,
        \item $G_2(x)=6DE_2(x)$,
        \item $G_3(x)=12D^2E_2(x)+2E_4(x)$,
        \item $G_4(x)=20D^3E_2(x)+10DE_4(x)$,
        \item $G_5(x)=30D^4E_2(x)+30D^2E_4(x)+2E_6(x)$.
    \end{itemize}
\end{expl}

\begin{proof}
    As there is no constant term, the vanishing part is obvious. To show the quasi-modularity and the formula, we proceed as follows. We start with the expression of $\bar{P}_a$, and write $1=\sum_{kl=a}l\delta_{l,1}$, so that we have:
    \begin{align*}
        \bar P_a-1 & = \sum_{kl=a} lx^{k(l-1)}(1-2x^k+x^{2k}) - \sum_{kl=1}l\delta_{l,1} \\
        = & \sum_{kl=a}l \left(x^{k(l-1)}-2x^{kl}+x^{k(l+1)}-\delta_{l,1}\right).
    \end{align*}
    Hence we get
    \begin{align*}
        G_m(x) &= \sum_{k,l=1}^\infty (kl)^m l\left( x^{k(l-1)}-\delta_{l,1}-2x^{kl}+x^{k(l+1)}\right) \\
        &= \sum_{k,l=1}^\infty k^ml^{m+1}(x^{k(l-1)}-\delta_{l,1}) -2 \sum_{k,l=1}^\infty  (kl)^mlx^{kl} + \sum_{k,l=1}^\infty k^ml^{m+1} x^{k(l+1)}.
    \end{align*}
    The summand of the first sum vanishes if $l=1$, thus we start at $l=2$ and set $l'=l-1$. For the last sum, we can add the summand $l=0$ since $l^{m+1}$ is $0$. Setting in this last sum $l'=l+1$, we get in total:
    \[ G_m(x) = \sum_{k,l'=1}^\infty k^m(l'+1)^{m+1}x^{kl'} -2\sum_{k,l=1}^\infty (kl)^mlx^{kl}+\sum_{k,l'=1}^\infty k^m(l'-1)^{m+1}x^{kl'}.
    \]
    We drop the prime on $l'$, use the binomial formula in the first and third sums and gather them:
    \begin{align*}
        G_m(x) &= \sum_{j=0}^{m+1} \bino{m+1}{j} (1+(-1)^j)\sum_{k,l=1}^\infty k^m l^{m+1-j}x^{kl} -2\sum_{k,l=1}^\infty (kl)^mlx^{kl}\\
        &= 2\sum_{0\leq2j\leq m+1} \bino{m+1}{2j} \sum_{k,l=1}^\infty k^m l^{m+1-2j}x^{kl} -2\sum_{k,l=1}^\infty (kl)^mlx^{kl}\\
        &= 2 \sum_{2\leq2j\leq m+1} \bino{m+1}{2j} \sum_{k,l=1}^\infty (kl)^{m+1-2j} k^{2j-1} x^{kl} .
    \end{align*}
    To get to the second row, we noticed that $1+(-1)^j$ is non-zero only when $j$ is even, and we thus only sum over even numbers. To get to the last row, we use that the last sum cancels with the $2j=0$ term in the first sum. To conclude we use that:   
    $$\sum_{k,l=1}^\infty (kl)^{m+1-2j}k^{2j-1} x^{kl} =  \sum_{a=1}^\infty a^{m+1-2j}\sigma_{2j-1}(a)x^a = D^{m+1-2j}E_{2j}(x).$$
    The ring of quasi-modular forms is stable by $D$, so that $D^{m+1-2j}E_{2j}(x)$ are quasi-modular forms. Thus, as $G_m$ is a polynomial in derivatives of quasi-modular forms, it is also a quasi-modular form, yielding the result.    
\end{proof}

\subsubsection{Proof of the main result.}\label{subsec-mainproof} 

We now get to prove Theorem \ref{theorem-abelian-codeg-series}. The idea is to split the sum over partitions of $n$ according to the distance to a face of the simplex $\{a_1+\cdots+a_{g-1}=n\}$, adding the missing terms so that we get sums over integral points of simplices of size $n$. These expression make the $G_m(x)$ appear naturally.

\begin{lem}\label{lem-sommeproduitPa}
    One has 
    \begin{align*}
        \sum_{a_1+\cdots+a_{g-1}=n}\bar P_{a_1}\cdots \bar P_{a_{g-1}} 
        &\equiv \sum_{s=0}^{g-2} \binom{g-1}{s}\sum_{\substack{a_1+\cdots+a_{g-1}=n \\ a_1,\dots,a_s\leqslant 2i }}(\bar P_{a_1}-1)\cdots (\bar P_{a_s}-1) \mod x^{i+1} . 
    \end{align*}
\end{lem}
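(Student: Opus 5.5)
The plan is to mimic the $g=3$ and $g=4$ computations, under the standing assumption (as there) that $n>2i(g-1)$. We expand each $\bar P_{a_j}$ as $(\bar P_{a_j}-1)+1$, giving
$$\prod_{j=1}^{g-1}\bar P_{a_j}=\sum_{I\subset[\![1;g-1]\!]}\prod_{j\in I}(\bar P_{a_j}-1),$$
and then sum over the integral points of the simplex $\{a_1+\cdots+a_{g-1}=n,\ a_j\geqslant 1\}$. The key fact, already used in the proof of Proposition \ref{prop-firstvalues}(i), is that $\bar P_a-1\equiv 0\mod x^{i+1}$ as soon as $a>2i$. This holds because for a strict divisor $k$ of $a$ the exponents $a-k,a,a+k$ are all at least $a/2>i$, while the $k=a$ term is $1-2x^a+x^{2a}\equiv 1$.

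For a fixed subset $I$, the summand $\prod_{j\in I}(\bar P_{a_j}-1)$ is therefore congruent to $0$ modulo $x^{i+1}$ unless $a_j\leqslant 2i$ for every $j\in I$. So, modulo $x^{i+1}$, we may restrict the sum over the simplex to points with $a_j\leqslant 2i$ for all $j\in I$.

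Next we handle $I=[\![1;g-1]\!]$. If all $a_j\leqslant 2i$, then $n=\sum a_j\leqslant 2i(g-1)<n$, so the restricted summation set is empty and this term contributes $0$. This is exactly why the sum in the statement stops at $s=g-2$.

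Finally we use symmetry. Both the simplex and the constraints are invariant under permuting coordinates, so the contribution of a subset $I$ depends only on $s=|I|$. Each such contribution equals the one for $I=\{1,\dots,s\}$, which is
$$\sum_{\substack{a_1+\cdots+a_{g-1}=n\\ a_1,\dots,a_s\leqslant 2i}}(\bar P_{a_1}-1)\cdots(\bar P_{a_s}-1).$$
There are $\binom{g-1}{s}$ subsets of size $s$, which gives the stated formula; the case $s=0$ is the plain count of integral points. No step is a real obstacle. The only thing to check carefully is the threshold $n>2i(g-1)$, which is needed to discard the $s=g-1$ term, so this hypothesis should be recorded explicitly (it is implicit in the statement via ``$n$ big enough'').
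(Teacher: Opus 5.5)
Your proposal is correct and follows the paper's proof essentially step by step. Both arguments expand $\prod_j \bar P_{a_j}$ over subsets $I$, truncate using $\bar P_a-1\equiv 0 \bmod x^{i+1}$ for $a>2i$, obtain the factor $\binom{g-1}{s}$ by symmetry, and discard the $s=g-1$ term using $n>2(g-1)i$. The paper also invokes that threshold only inside its proof, so your explicit justification of the vanishing and your remark that the hypothesis should be stated are worthwhile additions.
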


\begin{proof}
    The sum runs over the set of integer points of the simplex $\{a_1+\cdots a_{g-1}=n\}$. We first write each $\bar{P}_{a_j}$ as $\bar{P}_{a_j}-1+1$ and expand the inner product:
    $$\prod_{j=1}^{g-1}\bar{P}_{a_j} = \sum_{I\subset[\![1;g-1]\!]} \prod_{j\in I}(\bar{P}_{a_j}-1).$$
    We then sum over the integral points of the simplex. Since $\bar{P}_{a_j}-1$ vanishes modulo $x^{i+1}$ if $a_j>2i$, we are left with
    \begin{align*}
        \sum_{a_1+\cdots+a_{g-1}=n}\bar P_{a_1}\cdots \bar P_{a_{g-1}} \equiv & \sum_{I\subset[\![1;g-1]\!]} \sum_{\substack{a_1+\cdots+a_{g-1}=n \\ a_j\leqslant 2i \text{ if }j\in I}}\prod_{j\in I}(\bar{P}_{a_j}-1) \mod x^{i+1} \\
        \equiv & \sum_{s=0}^{g-1} \binom{g-1}{s} \sum_{\substack{a_1+\cdots+a_{g-1}=n \\ a_1,\dots,a_s\leqslant 2i}}\prod_{j=1}^s(\bar{P}_{a_j}-1) \mod x^{i+1},
    \end{align*}
    where we noticed that the inner sum only depends on the cardinality $s$ of the subset $I$, and that there are $\binom{g-1}{s}$ subsets of cardinality $s$. Finally, the $s=g-1$ term is $0$ since for $n>2(g-1)i$, there is no choice of $(a_j)$ all of them smaller than $2i$ and sum equal to $n$.
\end{proof}

\begin{proof}[Proof of Theorem \ref{theorem-abelian-codeg-series}]
By Formula (\ref{formule-BGbar}) we have to compute
\[ \dfrac{n}{g-1}\sum_{a_1+\cdots+a_{g-1}=n}\bar{P}_{a_1}\cdots\bar{P}_{a_{g-1}} \mod x^{i+1},\]
and by Lemma \ref{lem-sommeproduitPa} this amounts to determine
\[ \frac{n}{g-1}\sum_{s=0}^{g-2} \bino{g-1}{s}\sum_{\substack{a_1+\cdots+a_{g-1}=n \\ a_1,\dots,a_s\leqslant 2i }}(\bar P_{a_1}-1)\cdots (\bar P_{a_s}-1) \mod x^{i+1}. \]
We will deal with this expression, forgetting the factor $\frac{n}{g-1}$ for the moment.

Since $|\{a_{s+1}+\cdots+a_{g-1}=n-a_1-\cdots-a_s\}|=\binom{n-1-\sum_1^s a_j}{g-2-s}$ we get
    \begin{align*}
        \sum_{s=0}^{g-2} \bino{g-1}{s}\sum_{ a_1,\dots,a_s\leqslant 2i} \binom{n-1-\sum_1^s a_j}{g-2-s} (\bar P_{a_1}-1)\cdots (\bar P_{a_s}-1) \mod x^{i+1} . 
    \end{align*}
    We add the missing terms for $a_j>2i$. The latter do not change the value modulo $x^{i+1}$ since $\bar P_{a_j}-1\equiv 0\mod x^{i+1}$ if $a_j>2i$. We get
    \begin{align*}
         \sum_{s=0}^{g-2} \bino{g-1}{s}\sum_{ a_1,\dots,a_s=1}^\infty \binom{n-1-\sum_1^s a_j}{g-2-s} (\bar P_{a_1}-1)\cdots (\bar P_{a_s}-1) \mod x^{i+1} . 
    \end{align*}
    
    We put aside the $s=0$ term, equal to $\sbino{n-1}{g-2}$. The multiplication by $\frac{n}{g-1}$ yields $\binom{n}{g-1}$ as announced.
    
    If $s>0$, we see the binomial coefficient as the corresponding Hilbert polynomial, defined by $H_m(n)=\binom{n}{m}$ and which is of degree $m$. Then $\sbino{n-1-\sum_1^s a_j}{g-2-s} = H_{g-2-s}(n-1-\sum_1^s a_j)$ is a polynomial in $a_1,\dots,a_s$ with coefficients being polynomials in $n$. Given a monomial $a_1^{m_1}\cdots a_s^{m_s}$, the following sum factors in a product:
    \[ \sum_{a_1,\dots,a_s=1}^\infty \prod_{j=1}^s a_j^{m_j}(\bar P_{a_j}-1)=\prod_{j=1}^s G_{m_j}(x), \]
    and thus is a quasi-modular form since the $G_m$ are by Lemma \ref{lemma-Gm-quasi-modular}. This concludes for the quasi-modularity.
    
    The maximal degree in $n$ is achieved for $s=1$, with
    $$\sum_{a=1}^\infty H_{g-3}(n-1-a) (\bar P_a-1),$$
    which is a polynomial in $a,n$ of degree $g-3$. However, as $G_0=0$ by Lemma \ref{lemma-Gm-quasi-modular}, the summation over $a$ cancels the terms where the exponent of $a$ is $0$, and in particular the coefficient of $n^{g-3}$. Therefore, the degree in $n$ after summation over $a$ is $g-4$. The multiplication by $\frac{n}{g-1}$ yields the announced degree $g-3$.
\end{proof}

\begin{rem}
    More generally, to give a non-zero contribution, a monomial in $a_1,\dots,a_s$ needs to be of degree at least $1$ in each $a_j$. In particular, this forces $g-2-s\geqslant s$, or in other terms $s\leqslant\frac{g-2}{2}$.
\end{rem}

We recover the values computed in Proposition \ref{prop-firstvalues}, but also get new values.

\begin{expl}
Now, we take $g=5$. Due to the condition $2s\leqslant g-2=3$, the index $s$ can take the values $s=0,1$. The value $s=0$ yields the constant term. For $s=1$, we have
    $$H_2(n-1-a)=\frac{(n-a-1)(n-a-2)}{2}=\frac{1}{2}a^2+\frac{3-2n}{2}a+\bino{n}{2},$$
    so that
    \begin{align*}
        AR_5^\star(n,x) &= \bino{n}{4} + \frac{n}{4}\cdot 4\left( \frac{1}{2}G_2(x)+\frac{3-2n}{2}G_1(x) \right) \\
        &= \bino{n}{4} + \left(\frac{3}{2}G_1(x)+\frac{1}{2}G_2(x)\right)n-G_1(x)n^2 . 
    \end{align*}
\end{expl}

\begin{expl}
We now take $g=6$. In that case, we may have $s=0,1$ or $2$ since $2s\leqslant g-2=4$. The value $s=0$ yields the constant term. For $s=1$, we have
    \begin{align*}
        H_3(n-1-a) &= -\frac{(a-n+1)(a-n+2)(a-n+3)}{6} \\
        &= -\frac{1}{6}a^3+\frac{3n-6}{6}a^2-\frac{3n^2-12n+11}{6}a+\bino{n}{3}, 
    \end{align*}
    and for $s=2$,
    $$H_2(n-1-a_1-a_2)=a_1a_2+\cdots.$$
    Only the term $a_1a_2$ is of interest since the other monomials do not contribute: only the monomials with degree at least $1$ in each variable contribute. Thus, we have
    \begin{align*}
        AR_6^\star(n,x) &= \bino{n}{5} + \frac{n}{5}\left[ 5\left( -\frac{1}{6}G_3+\frac{3n-6}{6}G_2-\frac{3n^2-12n+11}{6}G_1\right) + 10G_1^2 \right] \\
        &= \bino{n}{5} + \left( 2G_1^2-G_2-\frac{1}{6}G_3-\frac{11}{6}G_1\right)n+\left(\frac{1}{2}G_2-2G_1\right)n^2-\frac{1}{2}G_1n^3.
    \end{align*}
\end{expl}

\section{First coefficients of the series in fixed codegree} \label{sec-fixedcodeg}

We defined the asymptotic invariant $AR_g^\star$, which is a polynomial in $n$ with coefficients in $\QQ[\![x]\!]$, and depending on $g$. We now adopt an orthogonal point of view studying the generating series in the parameter $g$:
$$AR^\star=\sum_{g=2}^\infty AR_g^\star u^g \in\QQ[n][\![x,u]\!],$$
which is now a formal series in parameters $x$ and $u$ with coefficients being polynomials in $n$. We first care about the generating series with the genus parameter and fixed codegree, meaning we consider $AR^\star$ modulo $x^{i+1}$ for some $i$. 
We will again use extensively Formula~(\ref{formule-BGbar}). If $Q$ is a polynomial, we denote by $\coef{Q}_i$ its degree $i$ coefficient. Recall that the binomial coefficient $\binom{n}{k}$ is $0$ if $k<0$.

\begin{prop}
    The first coefficients of $AR_g^\star$ are given by the following expressions:
    \begin{enumerate}[label=(\roman*)]
        \item $\coef{AR_g^\star}_0 = \displaystyle\binom{n}{g-1}$,
        \item $\coef{AR_g^\star}_1 = \displaystyle-2n\binom{n-3}{g-4}$, 
        \item $\coef{AR_g^\star}_2 = \displaystyle n \left( \binom{n-2}{g-3} - 6\binom{n-3}{g-3} + 3\binom{n-4}{g-3} - 8\binom{n-5}{g-6} - 2(n-5)\binom{n-6}{g-7} \right)$,
    \end{enumerate}
\end{prop}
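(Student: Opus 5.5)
The plan is to reuse, modulo $x^{3}$, the intermediate expression from the proof of Theorem~\ref{theorem-abelian-codeg-series}:
\[
AR_g^\star(n,x)\equiv \frac{n}{g-1}\sum_{s=0}^{g-2}\binom{g-1}{s}\sum_{a_1,\dots,a_s\geqslant 1}\binom{n-1-\sum_j a_j}{g-2-s}\prod_{j=1}^s(\bar P_{a_j}-1) \mod x^{i+1}.
\]
Taking $i=2$, I only need the truncations of $\bar P_a-1$ modulo $x^3$. From the formula for $\bar P_a$, a divisor $k$ of $a$ contributes the monomials $x^{a-k},x^{a},x^{a+k}$, so every term has degree at least $a-k$.
\begin{itemize}
\item If $a\geqslant 5$, every proper divisor satisfies $a-k\geqslant 3$, and the $k=a$ term gives $1+O(x^a)$. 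Hence $\bar P_a-1\equiv 0$.
\item For small $a$ a direct expansion gives $\bar P_1-1\equiv -2x+x^2$, $\bar P_2-1\equiv 2x-6x^2$, $\bar P_3-1\equiv 3x^2$ and $\bar P_4-1\equiv 2x^2$.
\end{itemize}
Since each factor $\bar P_{a_j}-1$ is divisible by $x$, only the terms $s\leqslant 2$ survive modulo $x^3$. Moreover, the $s=2$ terms only involve the $x$-coefficients, namely $a_j\in\{1,2\}$ with weights $\mp 2$.

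Next I read off each coefficient. The $s=0$ term gives $\binom{n}{g-1}$ at $x^0$, which is (i). For $s=1$ the prefactor is $\frac{n}{g-1}\binom{g-1}{1}=n$. The $x^1$ coefficient is therefore $n\bigl(-2\binom{n-2}{g-3}+2\binom{n-3}{g-3}\bigr)$, and Pascal's rule turns this into $-2n\binom{n-3}{g-4}$, which is (ii). For $x^2$, the $s=1$ part is
\[
n\left(\binom{n-2}{g-3}-6\binom{n-3}{g-3}+3\binom{n-4}{g-3}+2\binom{n-5}{g-3}\right).
\]
The $s=2$ part has prefactor $\frac{n}{g-1}\binom{g-1}{2}=\frac{n(g-2)}{2}$, multiplied by
\[
4\left(\binom{n-3}{g-4}-2\binom{n-4}{g-4}+\binom{n-5}{g-4}\right),
\]
a second finite difference. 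Applying Pascal's rule twice rewrites the bracket as $4\binom{n-5}{g-6}$, so this part equals $2n(g-2)\binom{n-5}{g-6}$.

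The main obstacle is the final rewriting into the stated shape, where no $g$ appears explicitly. I would write $g-2=(g-5)+3$ and use the absorption identity $(g-5)\binom{n-5}{g-5}=(n-5)\binom{n-6}{g-6}$, together with Pascal's rule. The goal is to trade the leftover $2\binom{n-5}{g-3}$ from the $s=1$ part and the $(g-2)$-weighted term for the terms $-8\binom{n-5}{g-6}$ and $-2(n-5)\binom{n-6}{g-7}$. I expect some trial with signs and indices here, so I would sanity-check the final identity numerically for small $g$ (e.g.\ $g=4,5,6$) against Proposition~\ref{prop-firstvalues} and the examples for $g=5,6$, using $G_1=2E_2$ and $G_2=6DE_2$.
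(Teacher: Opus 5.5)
\textbf{The step that fails is your last one.} Every ingredient you computed is correct, but printed formula (iii) is false, so no combination of Pascal's rule and absorption will reach it. Your route is essentially the paper's: single $x^2$-coefficients of one $\bar P_a$, plus products of two $x$-coefficients. You differ from the paper's proof at two points, and you are right at both.

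\begin{itemize}
\item The paper claims that only $\bar P_1,\bar P_2,\bar P_3$ have a non-zero $x^2$-coefficient. In fact the $k=2$ term of $\bar P_4$ gives $2x^2$: $\bar P_4=1+2x^2+4x^3-14x^4+4x^5+2x^6+x^8$. Lemma \ref{lemma-Gm-quasi-modular} forces this term, since the $x^2$-coefficients of $G_0=0$ and $G_1=2E_2$ are $1-6+3+2=0$ and $1-12+9+8=6=2\sigma_1(2)$.
\item The pair term is $\frac{n(g-2)}{2}\cdot 4\binom{n-5}{g-6}=+2n(g-2)\binom{n-5}{g-6}$, whereas the paper writes $-2(g-2)$.
\end{itemize}

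By absorption, the printed tail $-8\binom{n-5}{g-6}-2(n-5)\binom{n-6}{g-7}$ equals $-2(g-2)\binom{n-5}{g-6}$. Your planned trade would therefore require $\binom{n-5}{g-3}=-2(g-2)\binom{n-5}{g-6}$. This already fails for $g=3$, where it reads $1=0$.

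\textbf{Explicit counterexamples.} For $g=4$ the printed statement gives $n\bigl((n-2)-6(n-3)+3(n-4)\bigr)=-2n^2+4n$. Proposition \ref{prop-firstvalues}(iii) gives $-2n\sigma_1(2)=-6n$, which is exactly what your expression yields: $n\bigl((n-2)-6(n-3)+3(n-4)+2(n-5)\bigr)=-6n$. For $g=3$ the statement gives $-2n$, although $AR_3^\star=\binom{n}{2}$ has no $x^2$-term.

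\textbf{Structural obstruction.} For $g\geq 3$ the printed bracket has $n^{g-3}$-coefficient $(1-6+3)/(g-3)!\neq 0$. The claimed $x^2$-coefficient would then have degree $g-2$ in $n$, contradicting Theorem \ref{theorem-abelian-codeg-series}. Your extra term $2\binom{n-5}{g-3}$ restores $1-6+3+2=0$, which is the identity $G_0=0$.

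\textbf{What your argument actually proves} is the corrected statement
\[
\coef{AR_g^\star}_2 = n\Bigl(\binom{n-2}{g-3}-6\binom{n-3}{g-3}+3\binom{n-4}{g-3}+2\binom{n-5}{g-3}+8\binom{n-5}{g-6}+2(n-5)\binom{n-6}{g-7}\Bigr),
\]
not the printed one.

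\textbf{A caution for your numerical checks.} The $g=6$ example also contains a sign slip. Its $n^2$-coefficient should be $\frac12 G_2+2G_1$, not $\frac12 G_2-2G_1$. With that correction, the $x^2$-coefficient of $AR_6^\star$ is $-3n^3+30n^2-66n$, which matches the formula above.
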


\begin{proof}
    \begin{enumerate}[label=(\roman*)]
    \item The $0$-coefficients are the constant terms and have thus already been computed in Theorem \ref{theorem-abelian-codeg-series}, using that all $\bar{P}_a$ have constant term $1$. For convenience of the reader, we recall the proof here.
    Using $\coef{\bar P_a}_0=1$ and Formula (\ref{formule-BGbar}) one has 
    \[ \coef{BG^\star _{g,n}(x)}_0 =  \frac{n}{g-1}\sum_{a_1+\cdots a_{g-1}=n}1 = \frac{n}{g-1}\bino{n-1}{g-2}=\bino{n}{g-1}. \]

    \item Using Formula (\ref{formule-BGbar}), since the constant terms of $\bar{P}_a$ are $1$, we have
    \begin{align*}
         \coef{BG^\star _{g,n}(x)}_1 &= \frac{n}{g-1}\sum_{a_1+\cdots+a_{g-1}=n} \coef{\bar{P}_{a_1}\cdots\bar{P}_{a_{g-1}}}_1 \\
         &= \frac{n}{g-1}\sum_{a_1+\cdots+a_{g-1}=n}\left( \coef{\bar{P}_{a_1}}_1+\dots+\coef{\bar{P}_{a_{g-1}}}_1 \right)\\
         &= n\sum_{a_1+\cdots+a_{g-1}=n} \coef{\bar{P}_{a_1}}_1.
    \end{align*}
    Note that the degree $1$ coefficient of $\bar{P}_a$ is non-zero only for $a=1,2$ for which one has
    \begin{align*}
        \bar{P}_1(x)= & 1-2x+x^2, \\
        \bar{P}_2(x)= & 1+2x-6x^2+2x^3+x^4.
    \end{align*}
    Thus, in the sum, the value of $a_1$ can only be 1 or 2 and we get
    \begin{align*}
        \coef{BG^\star_{g,n}(x)}_1 &= n\left( \sum_{a_2+\cdots+a_{g-1}=n-1}\coef{\bar P_1}_1 + \sum_{a_2+\cdots+a_{g-1}=n-2}\coef{\bar P_2}_1 \right) \\
        &= 2n\left( \bino{n-3}{g-3}-\bino{n-2}{g-3} \right) = -2n\bino{n-3}{g-4}.
    \end{align*}

    \item  Using Formula (\ref{formule-BGbar}), since the constant terms of $\bar{P}_a$ are $1$, we have
    \begin{align*}
         \coef{BG^\star _{g,n}(x)}_2 &= \frac{n}{g-1}\sum_{a_1+\cdots+a_{g-1}=n} \left(\sum_{k=1}^{g-1} \coef{\bar{P}_{a_i}}_2 + \sum_{1\leq i<j\leq g-1} \coef{\bar P_{a_i}}_1 \coef{\bar P_{a_j}}_1 \right) \\
         &=n\sum_{a_1+\cdots+a_{g-1}=n} \coef{\bar{P}_{a_1}}_2 + \frac{n(g-2)}{2} \sum_{a_1+\cdots+a_{g-1}=n} \coef{\bar P_{a_1}}_1 \coef{\bar P_{a_2}}_1 .
    \end{align*}
    We now look for values of $(a_i)$ where the summand is non-zero.
        \begin{itemize}
            \item For the first sum, the only $\bar{P}_a$ with non-zero degree $2$ terms are $\bar{P}_1$, $\bar{P}_2$ and
    $$\bar{P}_3=1+3x^2-8x^3+3x^4+x^6.$$
    Hence one has
    \begin{align*}
        \sum_{a_1+\cdots+a_{g-1}=n} \coef{\bar{P}_{a_1}}_2 
        &= \sum_{a_2+\dots+a_{g-1}=n-1} \coef{\bar P_1}_2 + \sum_{a_2+\dots+a_{g-1}=n-2} \coef{\bar P_2}_2 + \sum_{a_2+\dots+a_{g-1}=n-3} \coef{\bar P_3}_2\\
        &= \binom{n-2}{g-3} -6\binom{n-3}{g-3} +3\binom{n-4}{g-3} .
    \end{align*}
            
            \item In the second sum, $a_1$ and $a_2$ can only be 1 or 2, so that this term is
    \begin{align*}
        &\ \sum_{a_1+\cdots+a_{g-1}=n} \coef{\bar P_{a_1}}_1 \coef{\bar P_{a_2}}_1 \\
        =&\  \sum_{a_3+\cdots+a_{g-1}=n-2} \coef{\bar P_{1}}_1^2 + 2 \sum_{a_3+\cdots+a_{g-1}=n-3} \coef{\bar P_{1}}_1 \coef{\bar P_{2}}_1 +  \sum_{a_3+\cdots+a_{g-1}=n-4} \coef{\bar P_{2}}_1^2  \\
        =&\  4 \left(  \binom{n-3}{g-4} -2\binom{n-4}{g-4} + \binom{n-5}{g-4} \right)  = 4 \binom{n-5}{g-6}.
        \end{align*}
        \end{itemize}
    
    Putting all together, taking into account the respective factors $n$ and $\frac{n(g-2)}{2}$ in front of the sums, we obtain
    \begin{align*}
     \coef{BG^\star _{g,n}(x)}_2 
     &= n\left( \binom{n-2}{g-3} -6\binom{n-3}{g-3} +3\binom{n-4}{g-3}  -2(g-2) \binom{n-5}{g-6} \right) \\
     &= n \left( \binom{n-2}{g-3} - 6\binom{n-3}{g-3} + 3\binom{n-4}{g-3} - 8\binom{n-5}{g-6} - 2(g-6)\binom{n-5}{g-6} \right) \\
     &= n \left( \binom{n-2}{g-3} - 6\binom{n-3}{g-3} + 3\binom{n-4}{g-3} - 8\binom{n-5}{g-6} - 2(n-5)\binom{n-6}{g-7} \right) . 
     \end{align*}
    \end{enumerate}
\end{proof}

\begin{coro}
    The generating series of the coefficients are:
    \begin{enumerate}[label=(\roman*)]
        \item for constant term, $u(1+u)^n-u$,
        \item for first coefficient, $-2nu^4(1+u)^{n-3}$,
        \item for second coefficient, $u(1+u)^n \left[  -2n^2 \frac{u^6}{(1+u)^6} + n\frac{3u^6 -10 u^5 - 9 u^4 - 8 u^3 - 2u^2}{(1+u)^6} \right]$.
    \end{enumerate}
\end{coro}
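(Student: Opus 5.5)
The plan is to take the closed formulas for $\coef{AR_g^\star}_0$, $\coef{AR_g^\star}_1$ and $\coef{AR_g^\star}_2$ from the preceding proposition, multiply each by $u^g$, and sum over $g\geqslant 2$ using the binomial theorem. One point needs care first. The coefficient of each $u^g$ is a polynomial in $n$, so it suffices to check each identity for all sufficiently large integers $n$. For such $n$, every binomial coefficient $\binom{n-c}{g-d}$ that appears has a nonnegative upper index, and we may use
\[
\sum_{g}\binom{n-c}{g-d}u^g = u^d(1+u)^{n-c}.
\]
This relies on the convention $\binom{m}{k}=0$ for $k<0$. With that convention, summing over all $g$ or over $g\geqslant 2$ gives the same result, except for terms with $d\leqslant 1$.

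For (i), we have $\sum_{g\geqslant 2}\binom{n}{g-1}u^g = u\sum_{k\geqslant 1}\binom{n}{k}u^k = u(1+u)^n-u$. The subtracted $u$ is exactly the missing $k=0$ term, which would correspond to $g=1$. For (ii), the identity above with $c=3$ and $d=4$ gives $-2nu^4(1+u)^{n-3}$ directly; here no boundary term arises, since $d=4\geqslant 2$.

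For (iii), we apply the same identity term by term. The result is
\[
n\Bigl[u^3(1+u)^{n-2}-6u^3(1+u)^{n-3}+3u^3(1+u)^{n-4}-8u^6(1+u)^{n-5}-2(n-5)u^7(1+u)^{n-6}\Bigr].
\]
We then factor out $u(1+u)^n/(1+u)^6$, leaving
\[
n\bigl[u^2(1+u)^4-6u^2(1+u)^3+3u^2(1+u)^2-8u^5(1+u)-2(n-5)u^6\bigr].
\]
The $n^2$ part is $-2n^2u^6$, as claimed. It remains to expand the rest and collect powers of $u$. The coefficients come out as $1-6+3=-2$ for $u^2$, $4-18+6=-8$ for $u^3$, $6-18+3=-9$ for $u^4$, $4-6-8=-10$ for $u^5$, and $1-8+10=3$ for $u^6$. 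This reproduces the numerator $3u^6-10u^5-9u^4-8u^3-2u^2$.

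There is no real obstacle. The only delicate points are the bookkeeping of the lower summation bound $g\geqslant 2$, which matters only in (i), and the justification that the identities hold as polynomials in $n$, which the large-$n$ reduction handles.
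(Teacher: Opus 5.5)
\textbf{The statement is false, so the proof cannot go through.} Your binomial-series summation, the factorisation by $u(1+u)^{n-6}$ and the coefficient collection are all correct, and they reproduce the paper's own proof line by line. Parts (i) and (ii) are fine. But (iii) is false as stated, because the closed formula for $\coef{AR_g^\star}_2$ that you import from the preceding proposition is wrong. The error is inherited, not introduced by your algebra, and it already breaks your first step.

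You can see this without recomputing anything:
\begin{itemize}
\item The lowest term of the claimed series is $-2nu^3$. This predicts $\coef{AR_3^\star}_2=-2n$, whereas $AR_3^\star=\binom{n}{2}$ does not depend on $x$.
\item Its $u^4$-coefficient is $4n-2n^2$, whereas $AR_4^\star=\binom{n}{3}-2nE_2(x)$ gives $-6n$.
\item Its $u^5$-coefficient is $n(-n^2+5n-3)$. This is cubic in $n$, which violates the degree bound $g-3=2$ of the main theorem.
\end{itemize}

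\textbf{Two things go wrong upstream.}

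First, $\bar{P}_4=1+2x^2+4x^3-14x^4+\cdots$; the divisor $k=2$ contributes $2x^2(1-2x^2+x^4)$. So $\bar P_1,\bar P_2,\bar P_3$ are not the only $\bar P_a$ with a nonzero $x^2$-coefficient, and a term $+2\binom{n-5}{g-3}$ is missing. Consistently, the identity $1-6+3+2=0$ is exactly the $x^2$-part of $G_0=0$.

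Second, the cross term $\frac{n(g-2)}{2}\cdot 4\binom{n-5}{g-6}$ is positive. Inside the bracket it must therefore read $+2(g-2)\binom{n-5}{g-6}$, not $-2(g-2)\binom{n-5}{g-6}$.

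The correct input is
\[
\coef{AR_g^\star}_2=n\left(\binom{n-2}{g-3}-6\binom{n-3}{g-3}+3\binom{n-4}{g-3}+2\binom{n-5}{g-3}+2(g-2)\binom{n-5}{g-6}\right).
\]
Running your own method on it works as follows. The new term contributes $2nu^3(1+u)^{n-5}$, and the cross term becomes $8nu^6(1+u)^{n-5}+2n(n-5)u^7(1+u)^{n-6}$. This yields
\[
nu^4(1+u)^{n-6}\bigl((2n-1)u^3+6u^2-9u-6\bigr)=u(1+u)^n\left[2n^2\frac{u^6}{(1+u)^6}+n\frac{-u^6+6u^5-9u^4-6u^3}{(1+u)^6}\right].
\]
Its $u^3,u^4,u^5$-coefficients are $0$, $-6n$ and $27n-6n^2$. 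These agree with the explicit $AR_3^\star$, $AR_4^\star$ and $AR_5^\star$ computed earlier in the paper, so this corrected series is what you should state and prove for (iii).
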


\begin{proof}    
    \begin{enumerate}[label=(\roman*)]
    \item Using the binomial formula, the generating series is the announced result:
    $$\sum_{g=2}^\infty \binom{n}{g-1}u^g = u(1+u)^n-u.$$

    \item The generating series is
    $$-2n\sum_{g=2}^\infty \binom{n-3}{g-4}u^g = -2nu^4(1+u)^{n-3}.$$

    \item Finally, computing the generating series of second coefficients for $g\geq2$ yields:
     \begin{align*}
         &\ n\left[ u^3(1+u)^{n-2} - 6u^3(1+u)^{n-3} + 3u^3(1+u)^{n-4} - 8u^6(1+u)^{n-5} - 2(n-5)u^7(1+u)^{n-6}  \right] \\
        =&\ nu^3(1+u)^{n-6} \left[ (1+u)^4 - 6(1+u)^3 + 3(1+u)^2 - 8u^3(1+u) - 2(n-5)u^4 \right] \\
        =&\ nu^3(1+u)^{n-6} \left[-2 n u^4 +3u^4 -10 u^3 - 9 u^2 - 8 u - 2 \right] \\
        =&\ u(1+u)^n \left[  -2n^2 \frac{u^6}{(1+u)^6} + n\frac{3u^6 -10 u^5 - 9 u^4 - 8 u^3 - 2u^2}{(1+u)^6} \right]. 
     \end{align*}
    \end{enumerate}
\end{proof}

\begin{rem}
In particular, we have that
$$AR^\star \equiv u(1+u)^n\left(1 - 2n\frac{u^3}{(1+u)^3}x \right) -u \mod x^2.$$
Taking into account the next coefficient, the generating series does not seem to have a compact form.
\end{rem}

\bibliographystyle{alpha}
\bibliography{biblio}

\end{document}

%% file: preamble.tex
\usepackage[utf8]{inputenc}
\usepackage[T1]{fontenc}
\usepackage[english]{babel}
\usepackage{amsmath}
\usepackage{amsthm}
\usepackage{amssymb}
\usepackage{mathrsfs}
\usepackage{dsfont}
\usepackage{graphicx} 
\usepackage[left=3cm,right=3cm,top=3cm,bottom=3cm]{geometry}
\usepackage{hyperref}

\usepackage{enumitem}
\usepackage{tabularray}

\usepackage{eucal} 

\usepackage{mlmodern}

\newcommand{\PP}{\mathbb{P}}
\newcommand{\NN}{\mathbb{N}}
\newcommand{\ZZ}{\mathbb{Z}}
\newcommand{\RR}{\mathbb{R}}
\newcommand{\CC}{\mathbb{C}}

\newcommand{\QQ}{\mathbb{Q}}

\newcommand{\TT}{\mathbb{T}}

\newcommand{\BBB}{\mathscr{B}}

\renewcommand{\P}{\mathcal{P}}

\newcommand{\M}{\mathcal{M}}

\newcommand{\bra}[1]{\left\langle #1 \right\rangle}
\newcommand{\coef}[1]{\left[ #1 \right]}

\newcommand{\pt}{\mathrm{pt}}
\newcommand{\ev}{\mathrm{ev}}
\newcommand{\ft}{\mathrm{ft}}

\renewcommand{\geq}{\geqslant}
\renewcommand{\leq}{\leqslant}
\renewcommand{\bar}{\overline}

\newcommand{\bino}[2]{\begin{pmatrix} #1 \\ #2 \\ \end{pmatrix}}
\newcommand{\sbino}[2]{\left(\begin{smallmatrix} #1 \\ #2 \\ \end{smallmatrix}\right)}

\newtheorem{theo}{Theorem}[section]

\newtheorem{prop}[theo]{Proposition}
\newtheorem{coro}[theo]{Corollary}
\newtheorem{lem}[theo]{Lemma}

\newtheorem*{namedtheorem}{\theoremname}
 \newcommand{\theoremname}{Theorem}
 \newenvironment{named}[1]{\renewcommand{\theoremname}{#1}\begin{namedtheorem}}{\end{namedtheorem}}

\theoremstyle{definition}
\newtheorem{defi}[theo]{Definition}

\theoremstyle{remark}
\newtheorem{remark}[theo]{Remark}
\newenvironment{rem}[1]{
    \begin{remark}#1}{
    \xqed{\blacklozenge}\end{remark}
}

\theoremstyle{remark}
\newtheorem{example}[theo]{Example}
\newenvironment{expl}[1]{
    \begin{example}#1}{
    \xqed{\lozenge}\end{example}
}

\newcommand{\xqed}[1]{
    \leavevmode\unskip\penalty9999 \hbox{}\nobreak\hfill
    \quad\hbox{\ensuremath{#1}}}

\usepackage{todonotes}

\newcommand{\ie}{i.e. }

\newcommand{\nocontentsline}[3]{}
\newcommand{\tocless}[2]{\bgroup\let\addcontentsline=\nocontentsline#1{#2}\egroup}

\makeatletter
\def\l@subsection{\@tocline{2}{0pt}{2.5pc}{5pc}{}}
\makeatother

\makeatletter
\renewcommand{\l@section}{\@tocline{1}{0pt}{10pt}{1pc}{\bfseries}}
\makeatother